\newcommand{\R}{{\mat R}}
\newcommand{\Z}{{\mat Z}}
\newcommand{\C}{{\mat C}}
\newcommand{\ds}{\displaystyle}
\newcommand{\no}{\nonumber}
\newcommand{\be}{\begin{eqnarray}}
\newcommand{\ben}{\begin{eqnarray*}}
\newcommand{\en}{\end{eqnarray}}
\newcommand{\enn}{\end{eqnarray*}}
\newcommand{\ba}{\backslash}
\newcommand{\pa}{\partial}
\newcommand{\ov}{\overline}
\newcommand{\curl}{{\rm curl\,}}
\newcommand{\Grad}{{\rm Grad\,}}
\newcommand{\dive}{{\rm div\,}}
\newcommand{\I}{{\rm Im\,}}
\newcommand{\Rt}{{\rm Re\,}}
\newcommand{\g}{\gamma}
\newcommand{\G}{\Gamma}
\newcommand{\eps}{\epsilon}
\newcommand{\vep}{\varepsilon}
\newcommand{\Om}{\Omega}
\newcommand{\om}{\omega}
\newcommand{\al}{\alpha}
\newcommand{\bt}{\beta}
\newcommand{\wi}{\widehat}
\newcommand{\ti}{\times}
\newcommand{\wid}{\widetilde}
\newcommand{\de}{\delta}
\newcommand{\na}{\nabla}
\newcommand{\mat}{\mathbb}
\newcommand{\se}{\setminus}
\newcommand{\la}{\lambda}
\newtheorem{theorem}{Theorem}[section]
\newtheorem{lemma}[theorem]{Lemma}
\newtheorem{remark}[theorem]{Remark}
\begin{document}
\renewcommand{\theequation}{\arabic{section}.\arabic{equation}}
\begin{titlepage}
\title{\bf Inverse electromagnetic scattering problems by a doubly periodic structure}
\author{Jiaqing Yang and Bo Zhang\\
LSEC and Institute of Applied Mathematics\\
Academy of Mathematics and Systems Science\\
Chinese Academy of Sciences\\
Beijing 100190, China\\
({\sf jiaqingyang@amss.ac.cn} (JY),\
{\sf b.zhang@amt.ac.cn} (BZ))}
\date{}
\end{titlepage}
\maketitle

\begin{abstract}
Consider the problem of scattering of electromagnetic waves by a doubly periodic structure.
The medium above the structure is assumed to be inhomogeneous characterized
completely by an index of refraction. Below the structure is a perfect conductor
or an imperfect conductor partially coated with a dielectric.
Having established the well-posedness of the direct
problem by the variational approach, we prove the uniqueness of the inverse problem, that is,
the unique determination of the doubly periodic grating with its physical property
and the index of refraction from a knowledge of the scattered near field by a countably infinite
number of incident quasi-periodic electromagnetic waves. A key ingredient in our proofs is
a novel mixed reciprocity relation derived in this paper.

\vspace{.2in}
{\bf Keywords:} Uniqueness, Maxwell's equations, inhomogeneous medium, doubly periodic structure,
mixed boundary conditions, mixed reciprocity relation, inverse problem.
\end{abstract}

\section{Introduction}
\setcounter{equation}{0}

Scattering theory in periodic structures has many applications in micro-optics,
radar imaging and non-destructive testing. We refer to \cite{PR} for historical remarks and
details of these applications.
This paper is concerned with direct and inverse problems of electromagnetic scattering
by a doubly periodic structure. The medium above the structure is assumed to be inhomogeneous.
Below the structure is a perfect conductor which may be partially coated with a dielectric.

Let the doubly periodic structure be described by the doubly periodic surface
\ben
\G_1:=\{x\in\R^3\,|\,x_3=f(x_1,x_2)\},
\enn
where $f\in C^2(\R^2)$ is a $2\pi$-periodic function of $x_1$ and $x_2$:
\ben
f(x_{1}+2n_{1}\pi,x_2+2n_2\pi)=f(x_1,x_2)\quad\forall n=(n_1,n_2)\in\Z^2.
\enn
Assume that the medium above the structure $\G_1$ is filled with an inhomogeneous, isotropic,
conducting or dielectric medium of electric permittivity $\epsilon>0$, magnetic permeability $\mu>0$
and electric conductivity $\sigma\geq 0$.
Suppose the medium is non-magnetic, that is, the magnetic permeability $\mu$ is a fixed constant in
the region above $\G_1$ and the field is source free. Then the electromagnetic wave propagation
is governed by the time-harmonic Maxwell equations (with the time variation of the
form $e^{-i\om t},$ $\om>0$)
\ben\label{maxwell}
 \curl E-i\omega\mu H=0,\qquad\;\;\curl H+i\omega(\epsilon+i\sigma/\omega)E=0,
\enn
where $E$ and $H$ are the electric and magnetic fields, respectively. Suppose
the inhomogeneous medium is $2\pi$-periodic with respect to the $x_1$ and $x_2$ directions,
that is, for all $n=(n_1,n_2)\in\Z^2$,
\ben
\epsilon(x_1+2\pi n_1,x_2+2\pi n_2,x_3)=\epsilon(x_1,x_2,x_3),\qquad
\sigma(x_1+2\pi n_1,x_2+2\pi n_2,x_3)=\sigma(x_1,x_2,x_3).
\enn
Suppose above the structure $\G_1$ is another doubly periodic surface defined by
\ben
\G_0:=\{x\in\R^3\,|\,x_3=g(x_1,x_2)\},
\enn
where $g\in C^2(\R^2)$ is a $2\pi$-periodic function of $x_1$ and $x_2$:
\ben
g(x_{1}+2n_{1}\pi,x_{2}+2n_{2}\pi)=g(x_{1},x_{2})\quad\forall n=(n_{1},n_{2})\in\Z^2,
\enn
which separates the region above $\G_1$ into two parts:
\ben
\Om_0&:=&\{x\in\R^3\,|\,x_3>g(x_1,x_2)\},\\
\Om_1&:=&\{x\in\R^3\,|\,f(x_1,x_2)<x_3<g(x_1,x_2)\}.
\enn
Assume further that $\epsilon(x)=\epsilon_0,$ $\sigma=0$ for $x\in\Om_0$ (which
means that the medium above the layer is lossless) and that the doubly periodic surface
$\Gamma_1$ is a perfectly conductor coated partially with a dielectric.

Consider the scattering of the electromagnetic plane wave
\ben
E^i(x)=pe^{ik_{0}x\cdot d},\quad H^i(x)=re^{ik_{0}x\cdot d}
\enn
incident on the doubly periodic structure $\G_0$ from the top region $\Om_0$,
where $k_0=\sqrt{\epsilon_0\mu}\omega$ is the wave number,
$d=(\al_1,\al_2,-\bt)=(\cos\theta_1\cos\theta_2,\cos\theta_1\sin\theta_2,-\sin\theta_1)$
is the incident wave vector whose direction is specified by $\theta_1$ and $\theta_2$ with
$0<\theta_1\le\pi,\,0<\theta_2\leq 2\pi$ and the vectors $p$ and $r$ are polarization directions
satisfying that $p=\sqrt{{\mu}/{\vep_0}}(r\times d)$ and $r\bot d$.
The problem of scattering of time-harmonic electromagnetic waves in this model leads to the
following problem (the magnetic field $H$ is eliminated):
\be\label{1.1a}
\curl\curl E-k_0^2E=0 &&\mbox{in}\;\;\Om_0,\\ \label{1.1b}
\curl\curl E-k_0^2q(x)E=0 &&\mbox{in}\;\;\Om_1,\\ \label{1.1c}
\nu\times E|_{+}=\nu\times E|_{-},\quad\nu\times\curl E|_{+}
             =\la_0\nu\times\curl E|_{-} &&\mbox{on}\;\;\G_0,\\ \label{1.1d}
\nu\times E=0 &&\mbox{on}\;\;\G_{1,D},\\ \label{1.1e}
\nu\times\curl E-i\rho(\nu\times E)\times\nu=0 &&\mbox{on}\;\;\G_{1,I}
\en
where $q(x)=(\epsilon(x)+i{\sigma(x)}/{\omega})/\epsilon_0$ is the refractive index,
$\nu$ is the unit normal at the boundary, $E=E^i+E^s$ is the total field in
$\Om_0$ with $E^s$ being the scattered electric field,
$\G_1=\ov{\G}_{1,D}\cup\ov{\G}_{1,I}$,
$\la_0$ and $\rho$ are two positive constants.

We require the scattered field $E$ to be {\em $\al$-quasi-periodic} with respect to
$x_1$ and $x_2$ in the sense that $E(x_1,x_2,x_3)e^{-i\al\cdot x}$ is $2\pi$ periodic
with respect to $x_1$ and $x_2$, where $\al=(\al_1,\al_2,0)\in\R^3$.
It is further required that the scattered field $E$ satisfies the following
Rayleigh expansion radiation condition as $x_3\to+\infty$:
\be\label{RE}
E^s(x)=\sum_{n\in\Z^2}E_ne^{i(\al_n\cdot x+\beta_nx_3)},\quad x_3>g_+:=\max_{x_1,x_2}g(x_1,x_2),
\en
where $\al_n=(\al_1+n_1,\al_2+n_2,0)\in\R^3$, $E_n=(E_n^{(1)},E_n^{(2)},E_n^{(3)})\in\C^3$
are the Rayleigh coefficients and
\ben
\beta_n=\begin{cases}
        (k_0^2-|\al_n|^2)^{1/2} &\mbox{if}\;|\al_n|^2\le k_0^2,\\
        i(|\al_n|^2-k_0^2)^{1/2} &\mbox{if}\;|\al_n|^2>k_0^2
        \end{cases}
\enn
with $i^2=-1$. From the fact that $\dive E^s(x)=0$ it is clear that
\ben
   \al_n\cdot E_n+ \beta_nE_n^{(3)}=0.
\enn
Throughout this paper we assume that $\beta_n\neq 0$ for all $n\in\Z^2$.

The direct problem is to compute the scattered field $E^s$ in $\Om_0$ and $E$ in $\Om_1$
given the incident wave $E^i$, the diffraction grating profiles $\G_0$ and
$\G_1$ with the corresponding boundary conditions and the refractive index $q(x)$.
Our inverse problem is to determine the grating profile $\G_1$ together with the impedance
coefficient $\rho$ in the case when the interface grating profile $\G_0$ is known
and the refractive index $q$ in the case when the grating surfaces $\G_0$
and $\G_1$ are known and flat, utilizing the knowledge of the incident wave $E^i$
and the total tangential electric field $\nu\times E$ on a plane $\G_h=\{x\in\R^3\,|\,x_3=h\}$
above the inhomogeneous layer.

Problems of scattering of electromagnetic waves by a doubly periodic structure have been
studied by many authors using both integral and variational methods.
The reader is referred to, e.g. \cite{AT,AB,B,BLW,D,DF,NS,Schmidt} for results on existence,
uniqueness, and numerical approximations of solutions to the direct problems.
Compared with the direct problem, not much attention has been paid to
inverse problems from doubly periodic structures although they are not only mathematically
interesting but have many important applications.
For the case when $\G_{1,I}=\emptyset$ and the medium above the periodic structure
$\G_1=\G_{1,D}$ is homogeneous,
the inverse scattering problem has been considered in \cite{AH,BZ,BZZ10}.
If the medium is lossy above the perfectly reflecting periodic structure,
Ammari \cite{AH} proved a global uniqueness result for the inverse problem
with one incident plane wave. If the medium is lossless above the perfectly
reflecting periodic structure, a local uniqueness result was obtained in \cite{BZ}
for the inverse problem with one incident plane wave by establishing a lower
bound of the first eigenvalue of the $\curl\curl$ operator with the
boundary condition (\ref{1.1d}) in a bounded, smooth convex domain in $\R^3.$
The stability of the inverse problem was also studied in \cite{BZ}.
Recently in \cite{BZZ10}, for the class of perfectly reflecting doubly
periodic polyhedral structures global uniqueness results have been
established in \cite{BZZ10} for the inverse problem in the case of lossless
medium above the structure, using only a minimal number (though unknown) of incident plane waves .
Further, for a general Lipschitz, bi-periodic, partly coated structure $\G_1$
a global uniqueness result was proved in \cite{HQZ} for the inverse problem
in the case of a lossless, homogeneous medium above the structure, using infinitely
many incident dipole sources.

On the other hand, for the case when $\G_{1,I}=\emptyset$ (i.e. $\G_1=\G_{1,D}$),
$\lambda_0=1$ and the grating surfaces $\G_0$ and $\G_1$ are known and flat,
a global uniqueness result was obtained in \cite{HYZ} for reconstructing
the refractive index $q$, using all electric dipole incident waves
(see \cite{KA} for the corresponding result in the 2D case).

In this paper, we prove global uniqueness results for the inverse problem of
recovering a general smooth bi-periodic profile with a mixed boundary condition
and a known bi-periodic interface from a knowledge of near field measurements
above the known interface with a countably infinite number of quasi-periodic incident waves
$E^i(x;m)=(1/k^2_0)\curl\curl[p\exp({i\al_m\cdot x-i\beta_{m}x_3})],$ $m=(m_1,m_2)\in\Z^2.$
Further, we also establish a global uniqueness result for the inverse problem of
determining the refractive index $q$ which depends on only one direction ($x_1$ or $x_2$)
for the case when $\G_{1,I}=\emptyset$ (i.e. $\G_1=\G_{1,D}$)
and the grating surfaces $\G_0$ and $\G_1$ are known and flat,
using a countably infinite number of quasi-periodic incident waves $E^i(x;m).$
This is an improvement to the result of \cite{HYZ}.
A key ingredient in our proofs is a novel mixed reciprocity relation derived
in this paper for bi-periodic structures.

The rest of the paper is organized as follows. In Section \ref{sec2}, we introduce
some suitable quasi-periodic function spaces and the Dirichlet-to-Neumann map
on an artificial boundary above the structure. The problem (\ref{1.1a})-(\ref{RE}) is
then reduced to a boundary value problem in a truncated domain.
In Section \ref{sec3}, we establish the well-posedness of the
scattering problem (\ref{1.1a})-(\ref{RE}), employing a variational approach.
Section \ref{sec4} is devoted to the inverse problems.
In Subsection \ref{sec41} novel mixed reciprocity relations are established for
doubly periodic structures, which play a key role in the proofs of
the uniqueness results for our inverse problems.
Subsection \ref{sec42} is devoted to the unique determination of the
doubly periodic grating profile $\G_1$ with its physical property,
where it is assumed that the interface $\G_0$ is known and the refractive index $q$
is a known constant.
Subsection \ref{sec43} is concerned with the unique reconstruction of the refractive index $q$,
where we only consider the case when the shape of the two grating profiles is known and flat,
which improves the result in \cite{HYZ}.

\section{Quasi-periodic function spaces}\label{sec2}
\setcounter{equation}{0}

In this section we introduce some function spaces needed in the study of our problems.
Due to the periodicity of the problem, the original problem can be reduced to
a problem in a single periodic cell of the grating profiles. To this end and
for the subsequent analysis, we use $\G_j,\Om_j$ ($j=0,1$), $\G_{1,D},\G_{1,I}$
and $\G_h$ for $h\in\R$ again to denote the single periodic part (i.e. in the
range $0<x_1,x_2<2\pi$) of the corresponding notations defined in the last section.
We also need the notation
\ben
\Om_h=\{x\in\R^3\,|\,0<x_1,x_2<2\pi,\;f(x_1,x_2)<x_3<h\}
\enn
for $h>\max\{f(x')\,|\,x'\in\R^2\}.$

We now introduce some vector quasi-periodic Sobolev spaces. Let
\ben
H(\curl,\Om_h)=\{E(x)=\sum_{n\in\Z^2}E_n(x_3)\exp({i\al_n\cdot x})\,|\,
                E\in(L^2(\Om_h))^3,\,\curl E\in(L^2(\Om_h))^3\}
\enn
with the norm
\ben
||E||_{H(\curl,\Om_h)}^2=||E||_{L^2(\Om_h)}^2+||\curl E||_{L^2(\Om_h)}^2
\enn
Note that the $\al$-quasi-periodic space $H(\curl,\Om_b)$ is a subset of the classical vector
space $\mathbb{H}(\curl,\Om_b)$ defined by
\ben
\mathbb{H}(\curl,\Om_b)=\{E\in(L^2(\Om_b))^3\,|\,\curl E\in(L^2(\Om_b))^3\}
\enn
with the norm $||E||^2_{\mathbb{H}(\curl,\Om_b)}=||E||^2_{L^2(\Om_b)}+||\curl E||^2_{L^2(\Om_b)}.$
Further, it was shown in \cite{BLW} that $H(\curl,\Om_b)$ can be characterized as
\ben
H(\curl,\Om_b)&=&\{E\in\mathbb{H}(\curl,\Om_b)\,|\,e^{2\pi i\al_1}E(0,x_2,x_3)
                    \times e_1=E(2\pi,x_2,x_3)\times e_1,\\
              &&\qquad e^{2\pi i\al_2}E(x_1,0,x_3)\times e_2=E(x_1,2\pi,x_3)\times e_2\},
\enn
where $e_1=(1,0,0)$ and $e_2=(0,1,0).$

To deal with the mixed boundary conditions (\ref{1.1d}) and (\ref{1.1e}), we introduce the
subspace of $H(\curl,\Om_{h}):$
\ben
X:=\{E\in H(\curl,\Om_h)\,|\,\nu\times E|_{\G_{1,D}}=0,\;
     \nu\times E|_{\G_{1,I}}\in L_t^2(\G_{1,I})\}
\enn
with the norm
\ben
||E||_X^2=||E||_{H(\curl,\Om_h)}^2+||\nu\times E||_{L_t^2(\G_{1,I})}^2
\enn
where $L_t^2(\G_{1,I})=\{E\in(L^2(\G_{1,I}))^3\,|\,\nu\cdot E=0\;\mbox{on}\;\G_{1,I}\}$.

For $x^\prime=(x_1,x_2,h)\in\G_h$, $s\in\R$ define
\ben
H_t^s(\G_h)&=&\{E(x^\prime)=\sum_{n\in\Z^2}E_n\exp(i\al_n\cdot x^\prime)\,|\,
              E_n\in\C^3,\,e_3\cdot E=0,\\
           &&\qquad \|E\|^2_{H^s(\G_h)}=\sum_{n\in\Z^2}(1+|\al_n|^2)^s|E_n|^2<+\infty\}\\
H_t^s(\dive,\G_h)&=&\{E(x^\prime)=\sum_{n\in\Z^2}E_n\exp(i\al_n\cdot x^\prime)\,|\,
                     E_n\in\C^3,\,e_3\cdot E=0,\\
                 &&\quad ||E||^2_{H^s(\dive,\G_h)}=\sum_{n\in\Z^2}(1+|\al_n|^2)^s
                        (|E_n|^2+|E_n\cdot\al_n|^2)<+\infty\}\\
H_t^s(\curl,\G_h)&=&\{E(x^\prime)=\sum_{n\in\Z^2}E_n\exp(i\al_n\cdot x^\prime)\,|\,
                     E_n\in\C^3,\,e_3\cdot E=0,\\
                 &&\quad ||E||^2_{H^s(\curl,\G_h)}=\sum_{n\in\Z^2}(1+|\al_n|^2)^s
                     (|E_n|^2+|E_n\times\al_n|^2)<+\infty\}
\enn
and write $L_t^2(\G_h)=H_t^0(\G_h).$
We have the duality result:
\ben
(H_t^s(\dive,\G_h))'=H_t^{-s-1}(\curl,\G_h).
\enn
Recalling the trace theorem on $\mathbb{H}(\curl,\Om_h)$, we have
\ben
H_t^{-1/2}(\dive,\G_h)=\{e_3\times E|_{\G_h}\,|\,E\in H(\curl,\Om_h)\}
\enn
and that the trace mapping from $H(\curl,\Om_h)$ to $H_t^{-1/2}(\dive,\G_h)$ is continuous and
surjective (see \cite{BCS} and the references there).
We also need the trace space $Y(\G_0)$ and its duality space $Y(\G_0)'$:
\ben
Y(\G_0)&=&\{f\in H_t^{-1/2}(\G_0)\,|\,\nabla_{\G_0}\cdot f\in H^{-1/2}(\G_0)\}\\
Y(\G_{0})'&=&\{f\in H_t^{-1/2}(\G_0)\,|\,\nabla_{\G_0}\times f\in H^{-1/2}(\G_0)\},
\enn
where $\nabla_{\G_0}$ denotes the surface gradient on $\G_0.$
Note that the trace space $Y(\G_0)$ can also be defined as follows (see \cite{Chen}
and \cite[p. 58-59]{M}):
\ben
Y(\G_0)&=&\{f\in(H^{-1/2}(\G_0))^3\;|\;\mbox{there exists}\,E\in H(\curl,\Om_h\ba\ov{\Om}_1)\\
       &&\;\mbox{with}\,\nu\times E=f\;\mbox{on}\;\G_0,\;\nu\times E=0\;\mbox{on}\;\G_h\}
\enn
for $h>g_+,$ with norm
\ben
\|f\|_{Y(\G_0)}=\inf\{\|E\|_{H(\curl,\Om_h\ba\ov{\Om}_1)}\;|\;E\in H(\curl,\Om_h\ba\ov{\Om}_1),
\;\nu\times E\big|_{\G_0}=f,\;\nu\times E\big|_{\G_h}=0\}.
\enn

We assume throughout this paper that $q$ satisfies the following conditions:
\begin{description}
\item ({\bf A1}) $q\in C^1(\ov{\Om}_1)$ and $q(x)=1$ for $x\in\Om_0$;
\item ({\bf A2}) $\I[q(x)]\geq0$ for all $x\in\ov{\Om}_1$ if $\G_{1,I}\neq\emptyset$ and
                 $\I[q(x_0)]>0$ for some $x_0\in\ov{\Om}_1$ if $\G_{1,I}=\emptyset$;
\item ({\bf A3}) $\Rt[q(x)]\ge\g$ for all $x\in\ov{\Om}_1$ for some positive constant $\g$.
\end{description}

\section{The direct scattering problem}\label{sec3}
\setcounter{equation}{0}

In this section we will establish the solvability of the scattering problem
(\ref{1.1a})-(\ref{RE}), employing the variational method.
To this end, we propose a variational formulation of the scattering problem in
a truncated domain by introducing a transparent boundary condition on $\G_h$
for $h>g_+$.

Let $x^\prime=(x_1,x_2,h)\in\G_h$ for $h>g_+$.
For $\wi{E}\in H_t^{-1/2}(\dive,\G_h)$ with
$$
\wi{E}(x^\prime)=\sum_{n\in\Z^2}\wi{E}_n\exp(i\al_n\cdot x^\prime)
$$
define the Dirichlt-to-Neumann map
$\mathcal{R}:\,H_t^{-1/2}(\dive,\G_h)\rightarrow H_t^{-1/2}(\curl,\G_h)$ by
\ben\label{DtoN}
(\mathcal{R}\wid{E})(x^\prime)=(e_3\times\curl E)\times e_3\quad{\rm on}\;\;\G_h,
\enn
where $E$ satisfying the Rayleigh expansion condition (\ref{RE}) is the unique
quasi-periodic solution to the problem
\ben
\curl\curl E-k^2E=0\quad{\rm for}\;x_3>h,\qquad
\nu\times E=\wid{E}(x^\prime)\quad{\rm on}\;\G_h.
\enn
The map $\mathcal{R}$ is well-defined and can be used to replace the radiation condition
(\ref{RE}) on $\G_h$. The scattering problem (\ref{1.1a})-(\ref{RE}) can then be
transformed into the following boundary value problem in the truncated domain $\Om_h$:
\be\label{s3.1}
\curl\curl E-k_0^2E=0 &&\mbox{in}\;\;\Om_0,\\ \label{s3.2}
\curl\curl E-k_0^2qE=0 &&\mbox{in}\;\;\Om_1,\\ \label{s3.3}
\nu\times E|_{+}-\nu\times E|_{-}=f_1,\;\;
  \nu\times\curl E|_{+}-\la_0\nu\times\curl E|_{-}=f_2 &&\mbox{on}\;\;\G_0,\\ \label{s3.4}
\nu\times E=f_3\quad\text{on}\quad\G_{1,D},\quad
  \nu\times\curl E-i\rho(\nu\times E)\times\nu=f_4 &&\mbox{on}\;\;\G_{1,I},\\ \label{s3.5}
(\curl E)_T-\mathcal{R}(e_3\times E)=0 &&\mbox{on}\;\;\G_h,
\en
where $f_1=-\nu\times E^i|_{\G_0}\in Y(\G_0),\;f_2=-\nu\times\curl E^i|_{\G_0}\in Y(\G_0)'$,
$f_3=f_4=0$ and, for any vector $F$, $(F)_T=(\nu\times F)\times\nu$ denotes its
tangential component on a surface.

\begin{remark}\label{re1} {\rm
In the case when $k_0^2q(x)\equiv k_1^2$ is a constant and the incident field
is given by the electric dipole source $E^i(x)=\mat{\wi{G}}_1(x,y_0)r$ for
$y_0\in\Om_1$ and $r\in\R^3$ (e.g. in the problem (\ref{l41.a})-(\ref{l41.g})
of Lemma \ref{le41}), we have $f_1=\nu\times E^i|_{\G_0}\in Y(\G_0),$
$f_2=\la_0\nu\times\curl E^i|_{\G_0}\in Y(\G_0)'$,
$f_3=-\nu\times E^i|_{\G_{1,D}}\in Y(\G_{1,D}),$
$f_4=-\nu\times\curl E^i|_{\G_{1,I}}+i\rho(\nu\times E^i)\times\nu|_{\G_{1,I}}\in L_t^2(\G_{1,I})$
in the problem (\ref{s3.1})-(\ref{s3.5}),
where $Y(\G_{1,D})$ is defined in the same way as $Y(\G_0)$ with $\G_0$ replaced by $\G_{1,D}$
(see \cite{HQZ}).
}
\end{remark}

Define
\ben
Y&:=&\{E\in H(\curl,\Om_1)\cap H(\curl,\Om_h\ba\ov{\Om}_1)\,|\,\nu\times E|_{\G_{1,D}}=f_3,\\
&&\qquad\nu\times E|_{\G_{1,I}}\in L_t^2(\G_{1,I}),\;
        \nu\times E|_{+}-\nu\times E|_{-}=f_1\;\mbox{on}\;\G_0\}
\enn
with the norm
\ben
\|E\|^2_Y=\|E\|^2_{H(\curl,\Om_1)}+\|E\|^2_{H(\curl,\Om_h\ba\ov{\Om}_1)}
          +||\nu\times E||_{L_t^2(\G_{1,I})}^2.
\enn
Then the variational formulation for the problem (\ref{s3.1})-(\ref{s3.5}) is given as follows:
find $E\in Y$ such that
\be\label{s3.6}
A(E,F)=B(F)\qquad\forall\;F\in X,
\en
where
\ben
A(E,F)&:=&\la_0\int_{\Om_1}(\curl E\cdot\curl\ov{F}-k^2_0qE\cdot\ov{F})dx\\
      &&+\int_{\Om_h\ba\ov{\Om}_1}(\curl E\cdot\curl\ov{F}-k^2_0E\cdot\ov{F})dx\\
      &&-i\la_0\rho\int_{\G_{1,I}}E_T\cdot\ov{F}_Tds
        -\int_{\G_h}\mathcal{R}(\nu\times E)\cdot(\nu\times\ov{F})ds,\\
B(F)&:=&\int_{\G_0}f_2\cdot\ov{F}_Tds+\la_0\int_{\G_{1,I}}f_4\cdot\ov{F}_Tds.
\enn
If $f_1\in Y(\G_0)$, then there exits $\wid{E}_0\in H(\curl,\Om_h\ba\ov{\Om}_1)$ such that
$\nu\times\wid{E}_0|_{\G_0}=f_1$, $\nu\times\wid{E}_0|_{\G_h}=0$.
Similarly, for $f_3\in Y(\G_{1,D})$ there exists a function $E_1\in H(\curl,\Om_h)$
such that $\nu\times E_1|_{\G_{1,D}}=f_3,$ $\nu\times E_1|_{\G_h}=0$ and
$\nu\times E_1|_{\G_{1,I}}\in L^2_t(\G_{1,I})$. Let $\wid{E}=E-E_0-E_1,$
where $E_0$ is a function in $\Om_h$ satisfying that $E_0|_{\Om_h\ba\ov{\Om}_1}=\wid{E}_0$
and $E_0|_{\Om_1}=0.$ Then $\wid{E}\in X$ and the variational problem (\ref{s3.6})
is equivalent to the problem: find $\wid{E}\in X$ such that
\be\label{s3.7}
A(\wid{E},F)=\wid{B}(F)\qquad\forall\;F\in X,
\en
where $\wid{B}(F)=B(F)-A(E_0,F)-A(E_1,F).$

\begin{theorem}\label{thm3.1}
Assume that the conditions (\textbf{A1})-(\textbf{A3}) are satisfied.
Then the problem $(\ref{s3.1})-(\ref{s3.5})$ has a unique solution $E\in Y$ for any
$f_1\in Y(\G_0),\,f_2\in Y(\G_0)'$, $f_3\in Y(\G_{1,D})$ and $f_4\in L^2_t(\G_{1,I})$.
Furthermore, we have
\ben
||E||_Y\le C(||f_1||_{Y(\G_0)}+||f_2||_{Y(\G_0)'}
+||f_3||_{Y(\G_{1,D})}+||f_4||_{L^2_t(\G_{1,I})}),
\enn
where $C$ is a positive constant depending only on $\Om_h$.
\end{theorem}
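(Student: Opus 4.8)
The plan is to establish Theorem \ref{thm3.1} via the equivalent variational problem (\ref{s3.7}) on the subspace $X$, combining a G\r{a}rding-type inequality with a uniqueness argument so that the Fredholm alternative applies. First I would verify the sesquilinear form $A(\cdot,\cdot)$ is bounded on $X\times X$: the volume integrals over $\Om_1$ and $\Om_h\ba\ov{\Om}_1$ are controlled by the $H(\curl)$-norms, the impedance term over $\G_{1,I}$ by the $L^2_t(\G_{1,I})$ part of the $X$-norm (built into $X$ precisely for this reason), and the boundary term involving $\mathcal{R}$ by the continuity of the Dirichlet-to-Neumann operator $\mathcal{R}:H_t^{-1/2}(\dive,\G_h)\to H_t^{-1/2}(\curl,\G_h)$ together with the continuity of the tangential trace $E\mapsto \nu\times E$ from $H(\curl,\Om_h)$. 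The antilinear functional $\wid{B}$ is bounded because $f_2\in Y(\G_0)'$ pairs with $\ov F_T$, $f_4\in L^2_t(\G_{1,I})$ pairs with $\ov F_T$ on $\G_{1,I}$, and the lifting terms $A(E_0,F)$, $A(E_1,F)$ are controlled using the bounds $\|E_0\|_{H(\curl,\Om_h\ba\ov\Om_1)}\lesssim\|f_1\|_{Y(\G_0)}$ and $\|E_1\|_{H(\curl,\Om_h)}\lesssim\|f_3\|_{Y(\G_{1,D})}$.

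Next I would prove a G\r{a}rding inequality. Writing $A(E,E) = A_0(E,E) - k_0^2\bigl(\la_0\int_{\Om_1}qE\cdot\ov E\,dx + \int_{\Om_h\ba\ov\Om_1}|E|^2\,dx\bigr)$ where $A_0$ collects the $|\curl E|^2$ terms, the impedance term, and the $\mathcal{R}$-term, the key point is that the embedding of $X$ into $(L^2(\Om_h))^3$ is compact (this requires the compactness of $H(\curl)\cap\{$tangential-trace in $L^2\}$ into $L^2$, which holds on Lipschitz domains — a standard Weber/Picard-type compactness result for the space with the extra $L^2_t(\G_{1,I})$ control). Hence the zeroth-order terms define a compact perturbation. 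For the remaining part I would show $\Rt A_0(E,E) + \text{(compact)} \ge c\|E\|_X^2$: the $|\curl E|^2$ integrals and the $L^2_t(\G_{1,I})$ norm appear with good sign using $\la_0,\rho>0$; the $\mathcal{R}$-term needs care, because $\mathcal{R}$ is only sign-definite up to finitely many propagating modes — so I would split the Rayleigh sum into the finitely many $n$ with $\beta_n$ real (a finite-rank, hence compact, contribution) and the infinitely many evanescent modes for which $-\int_{\G_h}\mathcal{R}(\nu\times E)\cdot(\nu\times\ov E)\,ds$ has the right sign. Assumption (\textbf{A3}), $\Rt q\ge\g>0$, is what makes the lower-order term cooperate after taking real/imaginary parts appropriately.

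Then comes uniqueness: assume $f_1=f_2=f_3=f_4=0$ and $E\in Y$ solves the homogeneous problem; I must show $E\equiv0$. Testing (\ref{s3.6}) with $F=E$ and taking imaginary parts, the volume $|\curl E|^2$ and $|E|^2$ real contributions drop out, leaving $k_0^2\la_0\int_{\Om_1}\I[q]|E|^2\,dx$, the impedance term $\la_0\rho\int_{\G_{1,I}}|E_T|^2\,ds$, and $\I\int_{\G_h}\mathcal{R}(\nu\times E)\cdot(\nu\times\ov E)\,ds$. The last equals (up to a positive factor) $\sum_{\beta_n\in\R}\beta_n|\cdots|^2\ge0$, the impedance term is $\ge0$, and the $\I[q]$ term is $\ge0$ by (\textbf{A2}); so all three vanish. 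From the vanishing of the $\G_h$-term all propagating Rayleigh modes are zero, which by Rellich's lemma / unique continuation for the exterior Helmholtz-type problem forces $E=0$ in $x_3>h$, hence $\nu\times E = (\curl E)_T = 0$ on $\G_h$. Then in the case $\G_{1,I}\neq\emptyset$ the impedance condition is already $0$ and we also have $\nu\times E=0$ on $\G_{1,D}$; in the case $\G_{1,I}=\emptyset$ we instead use $\I[q(x_0)]>0$ to get $E=0$ on a subdomain. Either way, unique continuation for the second-order elliptic system $\curl\curl E - k_0^2 qE=0$ (using $q\in C^1$, assumption (\textbf{A1})), propagated across $\G_0$ via the transmission conditions (which are homogeneous), yields $E\equiv0$ throughout $\Om_h$.

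With boundedness, the G\r{a}rding inequality, and uniqueness in hand, the Fredholm alternative for the operator associated to $A$ on $X$ gives existence and the a priori estimate $\|\wid E\|_X\le C\|\wid B\|_{X'}$; translating back through $E=\wid E+E_0+E_1$ and the bounds on the liftings produces $\|E\|_Y\le C(\|f_1\|_{Y(\G_0)}+\|f_2\|_{Y(\G_0)'}+\|f_3\|_{Y(\G_{1,D})}+\|f_4\|_{L^2_t(\G_{1,I})})$ with $C$ depending only on $\Om_h$. I expect the main obstacle to be the G\r{a}rding inequality step — specifically, simultaneously handling the sign-indefiniteness of the $\mathcal{R}$ operator on the finitely many propagating modes and invoking the correct compact-embedding theorem for the mixed space $X$ (the one with the extra $L^2_t(\G_{1,I})$ trace control on a merely Lipschitz, partly-coated boundary); the uniqueness step is more routine once the unique continuation principle for the $\curl\curl$ system is cited.
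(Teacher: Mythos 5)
Your overall architecture---reduce to the variational problem (\ref{s3.7}) on $X$, prove uniqueness by taking the imaginary part of $A(\wid E,\wid E)=0$, and obtain existence from a G\r{a}rding inequality plus the Fredholm alternative---is exactly the route the paper takes (the paper writes out only the uniqueness half and cites \cite{HYZ} and \cite{HQZ} for the Fredholm/existence half). However, two of your intermediate claims are false as stated. First, $X$ is \emph{not} compactly embedded in $(L^2(\Om_h))^3$: the Weber/Picard compactness theorems require control of $\dive E$ in addition to $\curl E$ and a tangential (or normal) trace, and without the divergence constraint the family $\{\nabla\phi\,:\,\phi\in H^1_0(\Om_h),\ \|\nabla\phi\|_{L^2}\le 1\}$ lies in the unit ball of $X$ (zero curl, zero tangential trace) yet has no $L^2$-convergent subsequence. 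Consequently the zeroth-order terms of $A$ are not a compact perturbation on all of $X$, and the G\r{a}rding argument as you describe it collapses. The standard repair---and the one used in the references the paper cites---is a Helmholtz/Hodge decomposition $X=X_0\oplus\nabla S$, with compactness available on the weakly divergence-free component $X_0$ and the sesquilinear form treated separately (coercively) on the gradient component.

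Second, in the uniqueness step your claim that the vanishing of the finitely many propagating Rayleigh coefficients forces $E=0$ in $x_3>h$ ``by Rellich's lemma'' is wrong in the periodic setting: each evanescent mode $\exp(i\al_n\cdot x+i\beta_n x_3)$ with $\I\beta_n>0$ is a nonzero radiating solution carrying no propagating part, so killing the propagating modes only shows the field decays exponentially. (This is precisely why uniqueness for gratings is delicate and why the paper leans on (\textbf{A2}) and the impedance term rather than on any Rellich-type argument.) Fortunately this step is dispensable: as in the paper, the vanishing of $\rho\int_{\G_{1,I}}|\wid E_T|^2\,ds$ gives $\nu\times\wid E=0$ on $\G_{1,I}$, and then the homogeneous impedance condition yields $\nu\times\curl\wid E=0$ there as well, so the Cauchy data vanish on $\G_{1,I}$ and unique continuation (after upgrading $\wid E$ to $(H^1)^3$ regularity, as the paper does via \cite{C96}) gives $\wid E\equiv0$ in $\Om_1$; when $\G_{1,I}=\emptyset$ one instead uses $\I q(x_0)>0$ to kill $\wid E$ on a ball. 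You should make the deduction of $\nu\times\curl\wid E=0$ on $\G_{1,I}$ explicit---knowing only $\nu\times\wid E=0$ on all of $\G_1$ is just the perfect-conductor condition and implies nothing. Once $\wid E\equiv0$ in $\Om_1$, the homogeneous transmission conditions and Holmgren's theorem propagate the vanishing across $\G_0$ into $\Om_h\ba\ov{\Om}_1$; you neither need, nor can you obtain directly, the vanishing above $\G_h$ first.
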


\begin{proof}
It is enough to prove that the problem (\ref{s3.7}) has a unique solution $\wid{E}\in X$
with the required estimate.

We first prove the uniqueness of solutions. To this end, let $f_j=0,$ $j=1,2,3,4$
and let $F=\wid{E}$ in (\ref{s3.7}). Then $A(\wid{E},\wid{E})=0$, that is,
\ben
&&\la_0\int_{\Om_1}(|\curl\wid{E}|^2-k^2_0q|\wid{E}|^2)dx
  +\int_{\Om_h\ba\ov{\Om}_1}(|\curl\wid{E}|^2-k^2_0|\wid{E}|^2)dx\\
&&\qquad-i\la_0\rho\int_{\G_{1,I}}|\wid{E}_T|^{2}ds
  -\int_{\G_h}\mathcal{R}(\nu\times\wid{E})\cdot(\nu\times\ov{\wid{E}})ds=0.
\enn
Taking the imaginary part of the above equation and noting that
the imaginary part of the last integral in the above equation
is non-negative (see \cite[Equation (16)]{HQZ}), we deduce that
\be\label{s3.8}
k^2_0\int_{\Om_1}\I(q)|\wid{E}|^2dx+\rho\int_{\G_{1,I}}|\wid{E}_T|^2ds\le 0.
\en
If $\G_{1,I}=\emptyset$, then by (\ref{s3.8}) and the condition (\textbf{A2})
we have $\wid{E}\equiv 0$ in a small ball $B(x_0;\delta)\subset\Om_1$.
By \cite[Theorem 6]{C96} we have $\wid{E}\in (H^1(\Om_1))^3$. Thus, by the unique continuation
principle (see \cite[Theorem 2.3]{Ok02}) we have $\wid{E}\equiv0$ in $\Om_1$.
This, together with the transmission condition (\ref{s3.3}) and Holmgren's uniqueness theorem,
implies that $\wid{E}\equiv 0$ in $\Om_h\ba\ov{\Om}_1$.
If $\G_{1,I}\neq\emptyset$, then (\ref{s3.8}) and the boundary condition (\ref{s3.4})
yield that $\nu\times\wid{E}|_{\G_{1,I}}=0$ and $\nu\times\curl\wid{E}|_{\G_{1,I}}=0$.
By unique continuation principle again we have $\wid{E}\equiv0$ in $\Om_1$.
Again, from the transmission condition (\ref{s3.3}) and Holmgren's uniqueness theorem
it follows that $\wid{E}\equiv 0$ in $\Om_h\ba\ov{\Om}_1$.
The uniqueness of solutions is thus proved for both cases.

Now, arguing similarly as in the proof of Theorem 4.1 in \cite{HYZ} or Theorem 3.1
in \cite{HQZ} (see \cite{HYZ,HQZ} for details) we can prove that
the problem (\ref{s3.7}) has a solution $\wid{E}\in X$ satisfying the estimate
\ben
\|\wid{E}\|_X\le C(||f_2||_{Y(\G_0)'}+||f_4||_{L^2_t(\G_{1,I})}
+||\wid{E}_0||_{H(\curl,\Om_h\ba\ov{\Om}_1)}+||E_1||_X)
\enn
with $C$ depending only on $\Om_h$.
Since $\wid{E}=E-E_0-E_1,$ and by taking the infimum over all
$\wid{E}_0\in H(\curl,\Om_h\ba\ov{\Om}_1)$ such that
$\nu\times\wid{E}_0|_{\G_0}=f_1$ and $\nu\times\wid{E}_0|_{\G_h}=0$
and over all $E_1\in H(\curl,\Om_h)$ such that $\nu\times E_1|_{\G_{1,D}}=f_3,$
$\nu\times E_1|_{\G_h}=0$ and $\nu\times E_1|_{\G_{1,I}}\in L^2_t(\G_{1,I})$
the desired estimate follows (on taking into account the definition of the norm
on $Y(\G_0)$ and $Y(\G_{1,D})$).

\end{proof}

\section{The inverse problems}\label{sec4}
\setcounter{equation}{0}

In this section we consider the inverse problems of determining the doubly periodic grating
profile $f$ with its physical property and the refractive index $q$ from a knowledge of
the incident and scattered fields. To this end, we need the free-space quasi-periodic Green's
function
\ben
G_0(x,y)=\frac{1}{8\pi^2}\sum_{n\in\Z^2}\frac{1}{i\beta_n}\exp({i\al_n\cdot(x-y)+i\beta_n|x_3-y_3|})
\enn
provided $\beta_n\neq 0$ for all $n\in\Z^2$ (see \cite{NS}).
In the neighborhood of $x=y$, $G_{0}$ can be represented in the form
$G_{0}(x,y)=\Phi(x,y)+a(x-y)$, where $\Phi(x,y)=\exp({ik_0|x-y|})/(4\pi|x-y|)$
is the fundamental solution to the three-dimensional Helmholtz equation $(\Delta+k^2_0)u=0$ and
$a(x-y)$ is a $C^{\infty}$ function (see \cite{KA1} for the 2D case).
We now introduce the quasi-periodic Green's tensor $\mat{G}_0\in\C^{3\times3}$
for the time-harmonic Maxwell equations:
\be\label{4.1}
\mat{G}_0(x,y)=G_0(x,y)\mat{I}+\frac{1}{k_0^2}\nabla_x\dive_x(G_0(x,y)\mat{I}),\qquad x\neq y,
\en
where $\mat{I}$ is a $3\times3$ identity matrix.
Consider the following incident dipole source located at $z\in\R^3$ with polarization $p$
($|p|=1$):
\ben
E^i(x)=\mat{G}_0(x,z)p,\qquad x\neq z.
\enn
Clearly, we have
\ben
\curl\curl E^{i}(x)-k_{0}^{2}E^{i}(x)=0,\qquad x\neq z.
\enn

\subsection{Mixed reciprocity relations}\label{sec41}

We establish two mixed reciprocity relations for the doubly periodic structure,
which play a key role in the proofs of uniqueness results for the inverse problems.

\begin{lemma}\label{le41}
Assume that $k^2_0q(x)\equiv k^2_1$ is a constant.
For $m=(m_1,m_2)\in\Z^2$ let $E^i(x;m)=(1/k^2_0)\curl\curl[p\exp({i\al_m\cdot x-i\beta_{m}x_3})]$
and let $E(x;m)$ (which is the sum $E^i(x;m)+E^s(x;m)$ in $\Om_0$) be the solution to the
the scattering problem $(\ref{1.1a})-(\ref{RE})$ with $E^i(x)=E^i(x;m).$
On the other hand, define $\wi{\al}:=-\al$ and for $y_0\in\Om_1$ and $r\in\R^3$
let $E^i(x;y_0)=\mat{\wi{G}}_1(x,y_0)r$ and let $\wi{E}(x;y_0)$ solve the scattering problem:
\be\label{l41.a}
\curl\curl\wi{E}-k_{0}^{2}\wi{E}=0 &&\mbox{in}\;\;\Om_{0},\\ \label{l41.b}
\curl\curl\wi{E}-k_{1}^{2}\wi{E}=0 &&\mbox{in}\;\;\Om_{1}\ba\{y_0\},\\ \label{l41.c}
\nu\times\wi{E}|_{+}=\nu\times\wi{E}|_{-},\quad
\nu\times\curl\wi{E}|_{+}=\la_0\nu\times\curl\wi{E}|_{-} &&\mbox{on}\;\;\G_{0},\\ \label{l41.d}
\nu\times\wi{E}=0\quad\mbox{on}\;\;\G_{1,D},\qquad
\nu\times\curl\wi{E}-i\rho(\nu\times\wi{E})\times\nu=0 &&\mbox{on}\;\;\G_{1,I},\\ \label{l41.e}
\wi{E}(x;y_0)=E^i(x;y_0)+\wi{E}^s(x;y_0) &&\mbox{in}\;\;\Om_1\ba\{y_0\},\\ \label{l41.f}
\wi{E}(x;y_0)=\sum_{n\in\Z^2}\wi{E}_n(y_0)\exp(i\wi{\al}_n\cdot x+\wi{\beta}_nx_3)
       &&\mbox{in}\;\;x_3>g_+,\\ \label{l41.g}
\wi{\al}_n\cdot\wi{E}_n+\wi{\beta}_n\cdot\wi{E}_n^{(3)}=0. &&
\en
Here, $\wi{\al}_{n}$ and $\wi{\bt}_{n}$ are defined by
\ben
\wi{\al}_n=(-\al_1+n_1,-\al_2+n_2,0)\quad\mbox{and}\quad
\wi{\beta}_n=\begin{cases}
 \sqrt{k^2_0-|\wi{\al}_n|^2} &\mbox{for}\;|\wi{\al}_n|^2\leq k_0^2,\\
 i\sqrt{|\wi{\al}_n|^2-k^2_0} &\mbox{for}\;|\wi{\al}_n|^2>k_0^2
\end{cases}
\enn
and $\mat{\wi{G}}_1(x,y_0)$ is defined by $(\ref{4.1})$ with $\al_n$ and
$k^2_0$ replaced by $\wi{\al}_n$ and $k_1^2$, respectively. Then we have
\be\label{l41}
r\cdot E(y_0;m)=\frac{8\pi^{2}i}{\la_0}\wi{\beta}_{-m}\wi{E}_{-m}(y_0)\cdot p.
\en
\end{lemma}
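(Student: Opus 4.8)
The plan is to derive the reciprocity relation by a standard Green's-type integration argument, applied in a truncated periodic cell and letting the upper boundary tend to infinity so that the radiation terms survive as explicit Rayleigh-coefficient contributions. First I would fix a height $h>g_+$ large enough that both sources $y_0$ and the relevant geometry lie below $\G_h$, and work in the cell $\Om_h$ with the two fields $E(\cdot;m)$ and $\wi E(\cdot;y_0)$. Since $k_0^2q\equiv k_1^2$ is constant, both $\curl\curl E-k_1^2E=0$ and $\curl\curl\wi E-k_1^2\wi E=0$ hold in $\Om_1$ away from $y_0$, and $\curl\curl(\cdot)-k_0^2(\cdot)=0$ holds in $\Om_0\cap\Om_h$. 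The key identity is the vector Green's second formula
\[
\int_{\partial D}\bigl(\nu\times E\cdot\curl\wi E-\nu\times\wi E\cdot\curl E\bigr)\,ds
=\int_D\bigl(E\cdot\curl\curl\wi E-\wi E\cdot\curl\curl E\bigr)\,dx,
\]
applied separately on $\Om_1$ (with a small ball $B(y_0;\eps)$ excised) and on $\Om_h\setminus\ov{\Om}_1$, then added with the factor $\la_0$ inserted on the $\Om_1$ piece so that the transmission conditions $(\ref{1.1c})$ and $(\ref{l41.c})$ on $\G_0$ cancel exactly. Here one must be a little careful: the two fields are quasi-periodic with \emph{opposite} quasi-momenta $\al$ and $\wi\al=-\al$, so the product $E(\cdot;m)\cdot\wi E(\cdot;y_0)$ and all the bilinear boundary integrands are genuinely $2\pi$-periodic, which is exactly what makes the lateral boundary contributions on $\{x_1=0\}\cup\{x_1=2\pi\}\cup\{x_2=0\}\cup\{x_2=2\pi\}$ cancel in pairs. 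That cancellation of the lateral faces, together with the cancellation on $\G_0$ and on the coated/conducting parts $\G_{1,D},\G_{1,I}$ (using $(\ref{1.1d})$--$(\ref{1.1e})$ and $(\ref{l41.d})$: on $\G_{1,D}$ both tangential traces vanish, and on $\G_{1,I}$ the impedance terms $-i\rho(\nu\times E)\times\nu$ are symmetric in the two fields after the $\la_0$-weighting), is the heart of the computation.

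After all interior and interface terms have been eliminated, two contributions remain. The first is the residue from the small sphere $\partial B(y_0;\eps)$ as $\eps\to0$: since $E^i(\cdot;y_0)=\mat{\wi G}_1(\cdot,y_0)r$ is the Maxwell Green's tensor with the same wavenumber $k_1$ as the equation satisfied by $E(\cdot;m)$ near $y_0$, the standard singularity analysis of $\mat G$ (write $\mat{\wi G}_1=G_1\mat I+k_1^{-2}\na\dive(G_1\mat I)$ and use $G_1=\Phi+a$ with $\Phi$ the Helmholtz fundamental solution) produces exactly $\la_0\,r\cdot E(y_0;m)$ in the limit — up to the factor $\la_0$ coming from the weighting on $\Om_1$. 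The scattered part $\wi E^s(\cdot;y_0)$, being smooth near $y_0$, contributes nothing to the residue. The second remaining contribution is the integral over $\G_h$, which I would evaluate by inserting the two Rayleigh expansions $(\ref{RE})$ for $E^s(\cdot;m)$ and $(\ref{l41.f})$ for $\wi E(\cdot;y_0)$, together with the explicit plane-wave form of the incident field $E^i(\cdot;m)=(1/k_0^2)\curl\curl[p\exp(i\al_m\cdot x-i\beta_m x_3)]$; orthogonality of $\{e^{i(\al_n-\wi\al_{n'})\cdot x'}\}$ over the cell forces $n'=-m$ (since $\wi\al_{-m}=-\al_{-m}=\al+(-(-m_1),\ldots)$... more precisely the mode of $\wi E$ matching $\al_m$ is the one indexed so that $\wi\al_{?}= \al_m$), and what survives is a single term proportional to $\wi\beta_{-m}\,\wi E_{-m}(y_0)\cdot p$. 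Collecting the $\G_h$ term and the residue term and matching the algebraic constants — the $8\pi^2$ from the normalization of $G_0$, the $i$ from $1/(i\beta_n)$, the $1/\la_0$ from dividing through the weighting — gives $(\ref{l41})$.

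The main obstacle, and the step I would allocate the most care to, is the bookkeeping on $\G_h$: one has to verify that the incident plane wave $E^i(\cdot;m)$ contributes to the $\G_h$ integral in a way that combines correctly with $E^s(\cdot;m)$ (both are needed, since $E=E^i+E^s$ there), and that the transparent-boundary structure — i.e. that $E^s(\cdot;m)$ and $\wi E(\cdot;y_0)$ both solve the same homogeneous Helmholtz-type system above $\G_h$ and are outgoing — means the purely-scattered$\times$purely-scattered part of the $\G_h$ integral vanishes by the symmetry of the Dirichlet-to-Neumann map $\mathcal R$ (each outgoing mode $e^{i\beta_n x_3}$ paired against another outgoing mode $e^{i\wi\beta_{n'}x_3}$ gives, after taking $\curl$, a factor that is antisymmetric when $n'$ matches $n$ and zero otherwise by orthogonality — so the only non-vanishing cross term is incident$\times$scattered). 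Keeping straight the three sign conventions ($-\beta_m x_3$ in the downgoing incident wave, $+\beta_n x_3$ in the upgoing scattered wave, $+\wi\beta_n x_3$ in $\wi E$) and the index reflection $n\mapsto-m$ is where an error is most likely to creep in; everything else is routine once the cancellations on $\G_0$, $\G_{1,D}$, $\G_{1,I}$ and the lateral faces have been organized. A secondary technical point is that $\wi E(\cdot;y_0)$ has only $H(\curl)$ regularity near $\G_{1,I}$ and near $\G_0$, so the boundary integrals must be interpreted in the appropriate trace-duality pairings ($Y(\G_0)$ with $Y(\G_0)'$, and $L_t^2(\G_{1,I})$); this is handled exactly as in the well-posedness argument of Theorem \ref{thm3.1} and in the cited works \cite{HQZ,HYZ}, and a density argument reduces the identity to smooth fields where the classical Green's formula applies.
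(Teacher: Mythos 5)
Your proposal follows essentially the same route as the paper's proof: a vector Green's identity in the periodic cell with a small ball excised at $y_0$, cancellation of the lateral faces by the opposite quasi-periodicities, cancellation on $\G_{1,D}$, $\G_{1,I}$ and $\G_0$ via the boundary and ($\la_0$-weighted) transmission conditions, extraction of the residue $r\cdot E(y_0;m)$ from the singularity of $\mat{\wi{G}}_1$, and evaluation of the surviving $\G_h$ integral by Rayleigh expansions and mode orthogonality (with the scattered$\times$scattered part vanishing since $\beta_n(\al)=\wi{\beta}_{-n}(\wi{\al})$). The only difference is organizational — the paper first derives the representation over $\G_0$ and $\G_1$ in $\Om_1$ alone and then transports it to $\G_h$ through $\Om_h\ba\ov{\Om}_1$, whereas you assemble both subdomains at once — which does not change the substance of the argument.
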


\begin{proof}
Note first that $E(x;m)$ and $\wi{E}(x;y_{0})$ are well-defined by the well-posedness
of the direct scattering problem. Applying Green's theorem in $\Om_1\se B(y_{0},\delta)$
and using the fact that contributions of the vertical line integrals cancel out due to the
periodicity, we have
\be\label{l4.1}
0&=&\int_{\Om_{1}\ba B(y_{0},\delta)}\left\{\curl\curl E(x;m)\cdot [\mat{\wi{G}}_{1}(x,y_{0})r]
              -E(x;m)\cdot \curl\curl [\mat{\wi{G}}_{1}(x,y_{0})r]\right\}dx\no\\
 &=&\left[\int_{\G_{0}}-\int_{\G_{1}}+\int_{\pa B_{\de}(y_{0})}\right]
              \left\{\nu\ti\curl E(x)\cdot [\mat{\wi{G}}_{1}(x,y_{0})r]
               -\nu\ti\curl[\mat{\wi{G}}_{1}(x,y_{0})r]\cdot E(x)\right\}ds\no\\
&:=&I_{1}+I_{2}+I_{3},
\en
where $B(y_{0},\delta)$ is a small ball centered at $y_{0}$ with the radius $\delta$
such that $B(y_{0},\de)\subset\Om_{1}$.

We now analyze the asymptotic behavior of $I_{3}$ as $\de\rightarrow 0$.
From the definition that
$\mat{\wi{G}}_1(x,y_0)=\wi{G}_1(x,y_0)\mat{I}+k_1^{-2}\nabla_x\dive_x(\wi{G}_1(x,y_0)\mat{I})$
it follows that
\be\label{l4.2}
I_3&=&\int_{\pa B(y_0,\de)}\left[\nu\ti\curl E(x;m)\cdot k_1^{-2}\nabla\dive[\wi{G}_1(x,y_0)r]
          -\nu\ti\curl[\wi{G}_{1}(x,y_{0})r]\cdot E(x;m)\right]ds\no\\
    &&+\int_{\pa B(y_0,\de)}\nu\ti\curl E(x;m)\cdot r\wi{G}_1(x,y_0)ds\no\\
    &:=&I_4+I_5.
\en
The regularity of $E(x;m)$ and the singularity of $\wi{G}_{1}(x,y_{0})$ at $x=y_0$ imply
that $I_{5}\rightarrow 0$ as $\de\rightarrow 0$.
On the other hand, by the divergence theorem on $\pa B(y_{0},\de)$ it can be seen that
\ben
I_4&=&\int_{\pa B_\de(y_0)}\left[\nu\ti\curl E(x;m)\cdot\frac{1}{k_1^2}\Grad\dive[\wi{G}_1(x,y_0)r]
          -\nu\ti\curl[\wi{G}_1(x,y_0)r]\cdot E(x;m)\right]ds\\
&=&\int_{\pa B_\de(y_0)}\left[\text{Div}(-\nu\ti\curl E(x;m))\frac{1}{k_1^2}\dive[\wi{G}_1(x,y_0)r]
          -\nu\ti\curl[\wi{G}_1(x,y_0)r]\cdot E(x;m)\right]ds\\
&=&\int_{\pa B_\de(y_0)}\left[(\nu\cdot\curl\curl E(x;m))\frac{1}{k_1^2}\dive[\wi{G}_1(x,y_0)r]
          -\nu\ti\curl[\wi{G}_{1}(x,y_{0})r]\cdot E(x;m)\right]ds\\
&=&\int_{\pa B_\de(y_0)}\left[(\nu\cdot E(x;m))\dive[\wi{G}_1(x,y_0)r]
          +\nu\ti E(x;m)\cdot\curl[\wi{G}_{1}(x,y_{0})r]\right]ds\\
&=&\int_{\pa B_{\de}(y_{0})}\left[(\nu\cdot E(x;m))\na\wi{G}_{1}(x,y_{0})
          -\na\wi{G}_{1}(x,y_{0})\ti(\nu\ti E(x;m))\right]ds\cdot r\\
&\to&-r\cdot E(y_0;m)
\enn
as $\delta\to0.$ This combined with (\ref{l4.1}) and (\ref{l4.2}) implies that
\be\label{l4.3}
&&r\cdot E(y_0;m)\no\\
&&=\left(\int_{\G_{0}}-\int_{\G_{1}}\right)
     \left[\nu\ti\curl E(x)\cdot[\mat{\wi{G}}_{1}(x,y_{0})r]
     -\nu\ti\curl[\mat{\wi{G}}_{1}(x,y_{0})r]\cdot E(x)\right]ds.
\en
Similarly, we have on noting the regularity of $\wi{E}^{s}(x;y_{0})$ that
\be\label{l4.4}
\left(\int_{\G_0}-\int_{\G_1}\right)\left[\nu\ti\curl E(x;m)\cdot\wi{E}^s(x;y_0)
     -\nu\ti\curl\wi{E}^s(x;y_0)\cdot E(x;m)\right]ds=0
\en
Combine (\ref{l4.3}) with (\ref{l4.4}) to conclude that
\ben
&&r\cdot E(y_{0};m)\\
&&=\left(\int_{\G_0}-\int_{\G_1}\right)\left[\nu\ti\curl E(x;m)\cdot\wi{E}(x;y_0)
         -\nu\ti\curl\wi{E}(x;y_0)\cdot E(x;m)\right]ds.
\enn
Making use of the boundary conditions on $\G_{j}\;(j=0,1)$ and Green's theorem in
$\Om_h\ba\ov{\Om}_1$ we obtain that
\ben
&&r\cdot E(y_{0};m)\\
&&=\int_{\G_0}\left[\nu\ti\curl E(x;m)|_{-}\cdot\wi{E}(x;y_0)|_{-}
   -\nu\ti\curl\wi{E}(x;y_0)|_{-}\cdot E(x;m)|_{-}\right]ds\\
&&=\frac{1}{\la_{0}}\int_{\G_{0}}\left[\nu\ti\curl E(x;m)|_{+}\cdot\wi{E}(x;y_{0})|_{+}
  -\nu\ti\curl\wi{E}(x;y_{0})|_{+}\cdot E(x;m)|_{+}\right]ds\\
&&=\frac{1}{\la_{0}}\int_{\G_{h}}\left[\nu\ti\curl E(x;m)\cdot\wi{E}(x;y_{0})
  -\nu\ti\curl\wi{E}(x;y_{0})\cdot E(x;m)\right]ds
\enn
Now, by the Rayleigh expansion radiation condition and the divergence-free
property for $E^{s}(x;m)$ and $\wi{E}(x;y_{0})$ we have,
on noting that $\beta_n(\al)=\wi{\beta}_{-n}(\wi{\al})$, that
\ben
\int_{\G_{h}}\left[\nu\ti\curl E^{s}(x;m)\cdot\wi{E}(x;y_{0})
   -\nu\ti\curl\wi{E}(x;y_{0})\cdot E^s(x;m)\right]ds=0.
\enn
This implies that
\ben\label{l4.5}
r\cdot E(y_{0};m)
=\frac{1}{\la_{0}}\int_{\G_{h}}\left[\nu\ti\curl E^{i}(x;m)\cdot\wi{E}(x;y_{0})
  -\nu\ti\curl\wi{E}(x;y_{0})\cdot E^{i}(x;m)\right]ds.
\enn
Insert $E^{i}(x;m)=k^{-2}_0\curl\curl[p\exp({i\al_{m}\cdot x-i\beta_{m}x_3})]$
and $\ds\wi{E}(x;y_{0})=\sum_{n\in\Z^2}\wi{E}_{n}\exp\{i\wi{\al}_{n}\cdot x+i\wi{\beta}_{n}x_3\}$
into the above equation to get
\ben
&&r\cdot E(y_{0};m)
=\frac{i}{\la_0}\sum_{n\in\Z^2}\left\{[\wi{E}_n(y_0)\times e_3]\times(\al_m;-\beta_m)
          -e_3\times[(\wi{\al}_n;\wi{\beta}_n)\times\wi{E}_n(y_0)]\right\}e^{i(\wi{\beta}_n-\beta_m)h}\\
       &&\cdot p_m\int^{2\pi}_{0}\int^{2\pi}_{0}e^{i(\wi{\al}_{n}+\al_{m})\cdot x}dx_1dx_2,
\enn
where $(\vec{a};b)$ is defined as $(\vec{a};b):=\vec{a}+(0,0,b)$ and
$p_{m}=p-[(\al_m;-\beta_m)\cdot p/k_0^2](\al_m;-\beta_m)$.

Finally, use the fact that $\wi{\al}_{n}+\al_{m}=(n+m,0)$, $\wi{\beta}_{-l}(\wi{\al})=\beta_{-l}(\al)$
for all $l\in\Z^2$ and $\ds\int^{2\pi}_{0}\int^{2\pi}_{0}e^{i(n+m,0)\cdot x}dx_1dx_2=0$
for $n+m\neq (0,0)$ to conclude that
\ben
&&r\cdot E(y_{0};m)\\
&&=\frac{4\pi^{2}i}{\la_0}\left\{[\wi{E}_{-m}(y_0)\times e_3]\times(\al_m;-\beta_m)
  -e_3\times[(\wi{\al}_{-m};\wi{\beta}_{-m})\times\wi{E}_{-m}(y_0)]\right\}\cdot p_m\\
&&=-\frac{4\pi^2i}{\la_0}\left\{(\al_m;-\beta_m)\ti[\wi{E}_{-m}(y_0)\times e_3]
  +e_3\ti[(\wi{\al}_{-m};\wi{\beta}_{-m})\ti\wi{E}_{-m}(y_0)]\right\}\cdot p_m\\
&&=-\frac{4\pi^2i}{\la_0}\left\{-\beta_m\wi{E}_{-m}-[(\al_m;-\beta_m)\cdot\wi{E}_{-m}]e_3
  +\wi{E}^{(3)}_{-m}(\wi{\al}_{-m};\wi{\beta}_{-m})-\wi{\beta}_{-m}\wi{E}_{-m}\right\}\cdot p_m\\
&&=-\frac{4\pi^{2}i}{\la_0}\left\{\wi{E}^{(3)}_{-m}(y_0)(\wi{\al}_{-m};\wi{\beta}_{-m})
  -2\wi{\beta}_{-m}\wi{E}_{-m}(y_0)\right\}\cdot p_m\\
&&=\frac{8\pi^{2}i}{\la_{0}}\wi{\bt}_{-m}\wi{E}_{-m}(y_{0})\cdot p,
\enn
where we have used the fact that
\ben
(\wi{\al}_{-m};\wi{\bt}_{-m})\cdot p_{m}&=&(\al_{m};-\bt_{m})\cdot p_{m}=0,\\
(\al_{m};-\bt_{m})\cdot\wi{E}_{-m}&=&(-\wi{\al}_{-m};-\wi{\bt}_{-m})\cdot\wi{E}_{-m}=0.
\enn
This completes the proof.
\end{proof}

If $y_0\in\Om_0$, define the total field $\wi{E}(x;y_0)=\wi{E}^s(x;y_0)
+\mat{\wi{G}}_0(x,y_0)s$ in $\Om_0$, where $\mat{\wi{G}}_{0}(x,y_{0})$ is
an $\wi{\al}$-quasi-periodic Green tensor defined in (\ref{4.1}) with $\al$
replaced with $\wi{\al}.$
Then arguing similarly as in the proof of Lemma \ref{le41} we can prove the following result.

\begin{lemma}\label{le42}
For $m=(m_1,m_2)\in\Z^2$ let $E^i(x;m)=(1/k^2_0)\curl\curl[p\exp({i\al_m\cdot x-i\beta_{m}x_3})]$
and let $E(x;m)$ (which is the sum $E^i(x;m)+E^s(x;m)$ in $\Om_0$) be the solution to the
the scattering problem $(\ref{1.1a})-(\ref{RE})$ with $E^i(x)=E^i(x;m).$
For $y_0\in\Om_0,$ $\wi{\al}=-\al$ and $r\in\R^3$ let $E^i(x;y_0)=\mat{\wi{G}}_0(x,y_0)r$
and let $\wi{E}(x;y_0)$ (which equals to the sum $E^i(x;y_0)+\wi{E}^s(x;y_0)$ in $\Om_0\ba\{y_0\}$)
satisfy the Maxwell equations $\curl\curl\wi{E}-k_0^2\wi{E}=0$ in $\Om_0\ba\{y_0\}$ and
$\curl\curl\wi{E}-k_0^2q\wi{E}=0$ in $\Om_1$ together with the transmission condition
\ben\label{l42.c}
\nu\times\wi{E}|_{+}=\nu\times\wi{E}|_{-},\quad
\nu\times\curl\wi{E}|_{+}=\la_0\nu\times\curl\wi{E}|_{-}\;\;\mbox{on}\;\;\G_0,
\enn
the boundary condition
\ben\label{l42.d}
\nu\times\wi{E}=0\quad\mbox{on}\;\;\G_{1,D},\qquad
\nu\times\curl\wi{E}-i\rho(\nu\times\wi{E})\times\nu=0\;\;\mbox{on}\;\;\G_{1,I}
\enn
and the Rayleigh expansion radiation condition
\ben\label{l42.e}
\wi{E}^s(x;y_0)=\sum_{n\in\Z^2}\wi{E}_n(y_0)\exp(i\wi{\al}_n\cdot x+\wi{\beta}_nx_3)
       \qquad\mbox{for}\;\;x_3>g_+,
\enn
where
\ben\label{l42.g}
\wi{\al}_n\cdot\wi{E}_n+\wi{\beta}_n\cdot\wi{E}_n^{(3)}=0.
\enn
Then we have
\be\label{l42}
r\cdot E(y_0;m)=8\pi^{2}i\wi{\beta}_{-m}\wi{E}_{-m}(y_0)\cdot p.
\en
\end{lemma}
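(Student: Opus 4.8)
The plan is to mirror the proof of Lemma \ref{le41}, now with the auxiliary point $y_0$ sitting in $\Om_0$ rather than in $\Om_1$, and with the simplification that the medium in $\Om_0$ is homogeneous with wave number $k_0$, so no factor $1/\la_0$ appears when we transport the surface integral from $\G_0$ across to $\G_h$. First I would apply Green's (vector) theorem to the pair $E(\cdot;m)$ and $\wi E(\cdot;y_0)$ on the domain $\Om_0^h\se B(y_0,\de):=(\Om_h\ba\ov\Om_1)\se B(y_0,\de)$, where $B(y_0,\de)\subset\Om_0$ is a small ball around the source point. Since both fields are $\curl\curl$-harmonic with the same wave number $k_0$ in $\Om_0$ (away from $y_0$), the volume integral vanishes and we are left with boundary contributions on $\G_0$, on $\G_h$, and on $\pa B(y_0,\de)$; the vertical side integrals cancel by quasi-periodicity (note that $E(\cdot;m)$ is $\al$-quasi-periodic while $\wi E(\cdot;y_0)$ is $\wi\al=-\al$-quasi-periodic, so the product is $2\pi$-periodic, which is exactly what makes the cancellation work).

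Next I would extract the singular contribution from $\pa B(y_0,\de)$ as $\de\to 0$. Writing $\mat{\wi G}_0(x,y_0)=\wi G_0(x,y_0)\mat I+k_0^{-2}\na_x\dive_x(\wi G_0(x,y_0)\mat I)$ and using that near $x=y_0$ one has $\wi G_0(x,y_0)=\Phi(x,y_0)+a(x-y_0)$ with $a$ smooth, the same computation as in Lemma \ref{le41} — splitting into the $\wi G_0\,\mat I$ part (which tends to $0$ by regularity of $E(\cdot;m)$ near $y_0$) and the gradient-divergence part (which, after integrating by parts on the sphere using $\curl\curl E=k_0^2 E$ there, reproduces $-r\cdot E(y_0;m)$) — gives
\[
r\cdot E(y_0;m)=\int_{\G_0}\left[\nu\ti\curl E(x;m)\cdot\mat{\wi G}_0(x,y_0)r-\nu\ti\curl[\mat{\wi G}_0(x,y_0)r]\cdot E(x;m)\right]ds+\mathcal J_h,
\]
where $\mathcal J_h$ is the analogous integral over $\G_h$ with opposite sign. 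Because $\wi E(x;y_0)-E^i(x;y_0)=\wi E^s(x;y_0)$ is regular throughout $\ov\Om_0^h$ and $\curl\curl$-harmonic there, a second application of Green's theorem (with no boundary term from $y_0$ now) lets me replace $E^i(x;y_0)=\mat{\wi G}_0(x,y_0)r$ by the full field $\wi E(x;y_0)$ in both surface integrals.

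Then I would use the boundary and transmission conditions to simplify. On $\G_0$, the transmission conditions (\ref{l41.c})-type relations for $\wi E$ and the matching conditions (\ref{1.1c}) for $E(\cdot;m)$ combine so that the $\G_0$ integral, after crossing from the $+$ side to the $-$ side, can be pushed down through $\Om_1$ by Green's theorem: both $E(\cdot;m)$ and $\wi E(\cdot;y_0)$ satisfy $\curl\curl u=k_0^2 q\,u$ in $\Om_1$, and on $\G_{1,D}\cup\G_{1,I}$ the boundary conditions (\ref{1.1d})-(\ref{1.1e}) are bilinearly self-annihilating for the pair (the $\G_{1,D}$ term vanishes since both tangential traces are zero; the impedance term on $\G_{1,I}$ cancels because $(\nu\ti u)\times\nu\cdot\ov{(\nu\ti v)\times\nu}$ is symmetric in $u,v$ and enters with the same sign). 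Hence the $\G_0$ contribution is zero and only the $\G_h$ integral survives, giving
\[
r\cdot E(y_0;m)=\int_{\G_h}\left[\nu\ti\curl\wi E(x;y_0)\cdot E(x;m)-\nu\ti\curl E(x;m)\cdot\wi E(x;y_0)\right]ds.
\]
Here the crucial point — and the place where $\la_0$ would have appeared in Lemma \ref{le41} but does not now — is that the path from $\G_0$ up to $\G_h$ stays entirely inside $\Om_0$, where the equation has no coefficient, so there is no $1/\la_0$ factor; this is exactly why (\ref{l42}) has no $\la_0$ while (\ref{l41}) does.

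Finally I would insert the explicit Rayleigh expansions. On $\G_h$ split $E(x;m)=E^i(x;m)+E^s(x;m)$; the $E^s$ part pairs with $\wi E^s$ and both are outgoing Rayleigh series with the reciprocal relation $\beta_n(\al)=\wi\beta_{-n}(\wi\al)$, so by the same orthogonality-plus-divergence-free argument as in Lemma \ref{le41} that cross term integrates to zero over one period. What remains is the $E^i(x;m)$ term paired against $\wi E^s(x;y_0)=\sum_n\wi E_n(y_0)e^{i(\wi\al_n\cdot x+\wi\beta_n x_3)}$. Substituting $E^i(x;m)=k_0^{-2}\curl\curl[p\,e^{i\al_m\cdot x-i\beta_m x_3}]$, carrying out the $x_1,x_2$-integration over $[0,2\pi]^2$, which kills every term except $n=-m$ (since $\wi\al_n+\al_m=(n+m,0)$), and using the algebraic identities $(\wi\al_{-m};\wi\beta_{-m})\cdot p_m=0$ and $(\al_m;-\beta_m)\cdot\wi E_{-m}=0$ exactly as at the end of the previous proof, all terms collapse to $8\pi^2 i\,\wi\beta_{-m}\wi E_{-m}(y_0)\cdot p$, which is (\ref{l42}). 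I expect the main obstacle to be bookkeeping: verifying carefully that the two Green's-theorem applications across $\Om_0$ and across $\Om_1$ assemble with consistent orientations of $\nu$ and that the $\G_{1,I}$ impedance term genuinely cancels (it does, by the symmetry of the impedance quadratic form, but the sign must be tracked), together with the routine but lengthy vector algebra in the final Rayleigh-coefficient computation, which is identical in structure to that in Lemma \ref{le41} with $\la_0$ set to $1$.
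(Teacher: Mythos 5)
Your overall strategy is the right one: the paper gives no written proof of Lemma \ref{le42} (it only remarks that one argues as in Lemma \ref{le41}), and the route you describe --- Green's identity in $(\Om_h\se\ov{\Om}_1)\se B(y_0,\de)$, extraction of $-r\cdot E(y_0;m)$ from the sphere, annihilation of the $\G_0$ contribution via the transmission conditions, Green's theorem in $\Om_1$ and the self-annihilating boundary terms on $\G_1$, and the disappearance of the $1/\la_0$ factor --- is exactly the intended adaptation. However, there is a genuine gap at the final step, and it occurs at precisely the one point where the case $y_0\in\Om_0$ differs from the case $y_0\in\Om_1$. In Lemma \ref{le41} the source point lies in $\Om_1$, so for $x_3>g_+$ the \emph{whole} field $\wi{E}(\cdot;y_0)$ is the outgoing Rayleigh series (\ref{l41.f}) and may legitimately be replaced by that series on $\G_h$. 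Here $y_0\in\Om_0$, so on $\G_h$ (which must be taken above $y_0$) one has $\wi{E}=\mat{\wi{G}}_0(\cdot,y_0)r+\wi{E}^s$, and the Rayleigh coefficients of the total field are \emph{not} the coefficients $\wi{E}_n(y_0)$ of $\wi{E}^s$ appearing in the statement. You correctly note that the pairing of $E^s(\cdot;m)$ with $\wi{E}$ vanishes (both are upgoing with reciprocal quasi-momenta), but when you then assert that ``what remains is the $E^i(x;m)$ term paired against $\wi{E}^s(x;y_0)$'' you silently discard the pairing of the downgoing wave $E^i(\cdot;m)$ with the upgoing field $\mat{\wi{G}}_0(\cdot,y_0)r$. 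That pairing does \emph{not} vanish: the $x'$-integration again selects $n=-m$, the vertical exponential is $e^{i(\wi{\bt}_{-m}-\bt_m)h}=1$, and the surviving bracket contributes $8\pi^{2}i\,\wi{\bt}_{-m}G_{-m}\cdot p$, where $G_{-m}$ is the $(-m)$-th Rayleigh coefficient of $\mat{\wi{G}}_0(\cdot,y_0)r$ above $y_0$; using the explicit expansion of $G_0$ (or a second Green's identity in a strip $\{c<x_3<h\}$ containing $y_0$, whose $\G_c$ term vanishes because both waves there are downgoing) one finds that this contribution equals $r\cdot E^i(y_0;m)$.

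Consequently, carrying your argument out in full yields $r\cdot E^s(y_0;m)=8\pi^{2}i\,\wi{\bt}_{-m}\wi{E}_{-m}(y_0)\cdot p$, i.e.\ the scattered rather than the total field on the left-hand side. This is in fact exactly the form invoked in (\ref{pt42.1}) in the proof of Theorem \ref{thm4.2}, so the term you dropped is also what reconciles the lemma with its application; as literally written, with the total field, the identity (\ref{l42}) is off by $r\cdot E^i(y_0;m)$, and your proposal inherits this discrepancy rather than resolving it. A secondary point: your intermediate $\G_h$ formula has the two terms in the opposite order from the corresponding step of Lemma \ref{le41}, which flips the overall sign; this is only orientation bookkeeping, but it should be fixed together with the missing term above.
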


\subsection{Unique determination of the impenetrable profile $f$}\label{sec42}

We now consider the unique determination of the impenetrable grating profile $f$,
assuming that the interface profile $g$ is known and
$k^2_0q(x)\equiv k^2_1$ is a constant. A key ingredient in our proof is
the mixed reciprocity relation for the doubly periodic structure (see Lemma \ref{le41}).

\begin{theorem}\label{thm4.1}
Assume that $\beta_n\neq 0$ for all $n\in\Z^2$, the interface profile $g$ is known and
$k^2_0q(x)\equiv k^2_1$ is a constant. Let $f_1,\,f_2\in C^2(\R^2)$ be $2\pi$-periodic,
let $\rho_1,\,\rho_2$ be two constants and let $h>\max_{x\in\R^2}\{f_1(x),f_2(x)\}$.
If $\nu\times E_{1,m}^s|_{\G_h}=\nu\times E_{2,m}^s|_{\G_h}$ for all incident waves
$E^i_m(x)=(1/k^2_0)\curl\curl[e_l\exp({i\al_m\cdot x-i\beta_{m}x_3})]$ with
$m\in\Z^2$ and $l=1,2,3$, then
\ben
f_1=f_2,\quad\G_{f_1,D}=\G_{f_2,D},\;\;\G_{f_1,I}=\G_{f_2,I},\;\;\rho_1=\rho_2,
\enn
where $e_l$ is the unit vector in the direction $x_l,$ $l=1,2,3.$
Here, $E_{j,m}=E^i_m+E^s_{j,m}$ in $\Om_0$ and $E_{j,m}$ in $\Om_{1f_j}$ are the
unique quasi-periodic solution of the scattering problem $(\ref{1.1a})-(\ref{RE})$
with $E^i=E^i_m$, $\rho=\rho_j$ and $f=f_j$, where
$\Om_{f_j}=\{x\in\R^3\,|\,f_j(x_1,x_2)<x_3<g(x_1,x_2)\},$ $j=1,2.$
\end{theorem}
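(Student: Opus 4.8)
The plan is to argue by contradiction using the mixed reciprocity relation of Lemma \ref{le41} together with a uniqueness argument for the scattering problem in the exterior region. Suppose $f_1\neq f_2$ (or the coincidence of the $f_j$'s together with a mismatch of the Dirichlet/impedance decomposition or of the $\rho_j$'s). The first step is to show that the near-field coincidence $\nu\times E^s_{1,m}|_{\G_h}=\nu\times E^s_{2,m}|_{\G_h}$ for all incident waves $E^i_m$, $m\in\Z^2$, $l=1,2,3$, forces the two scattered fields to agree in the whole common region above both gratings. Since the $E^s_{j,m}$ solve $\curl\curl E - k_0^2 E = 0$ for $x_3 > g_+$ and satisfy the Rayleigh expansion \eqref{RE}, equality of their tangential traces on the single plane $\G_h$ propagates upward by uniqueness of the Rayleigh expansion, and then downward by Holmgren's uniqueness theorem into the connected open set $G := \Om_{f_1}\cap\Om_{f_2}\cap\{x_3 > g_+\}$ lying above $\G_0$ (where the equation is still $\curl\curl E - k_0^2 E = 0$ for both, since $q\equiv k_1^2/k_0^2$ is constant and, below $\G_0$, both total fields satisfy the same constant-coefficient Maxwell system; the transmission conditions on the known $\G_0$ then let us continue the agreement below $\G_0$ as well, into the part of $\Om_{1f_1}\cap\Om_{1f_2}$ common to both). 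Hence $E_{1,m} = E_{2,m}$ in the connected component of $(\Om_{1f_1}\cap\Om_{1f_2})\setminus\G_0$ whose closure meets $\G_h$.

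The second step is to exploit Lemma \ref{le41} to pass from the incident plane waves $E^i_m$ to incident dipole sources. For a fixed point $y_0$ lying in the common region just identified, and for each polarization $r = e_l$, the reciprocity relation \eqref{l41} reads
\[
r\cdot E_{j}(y_0;m) = \frac{8\pi^2 i}{\la_0}\,\wi\beta_{-m}\,\wi E^{(j)}_{-m}(y_0)\cdot p,
\]
where $\wi E^{(j)}(\cdot;y_0)$ is the solution of the dipole scattering problem \eqref{l41.a}-\eqref{l41.g} associated with the grating $f_j$ and impedance $\rho_j$. Since the left-hand sides agree for $j=1,2$ for all $m\in\Z^2$ and all $l$ (by Step 1 applied at $y_0$), we get $\wi\beta_{-m}\wi E^{(1)}_{-m}(y_0)\cdot p = \wi\beta_{-m}\wi E^{(2)}_{-m}(y_0)\cdot p$ for every $m$ and every $p$ of the allowed form; since $\wi\beta_{-m}\neq 0$, this yields equality of all Rayleigh coefficients of the two dipole scattered fields $\wi E^{(1),s}(\cdot;y_0)$ and $\wi E^{(2),s}(\cdot;y_0)$, hence $\wi E^{(1),s}(x;y_0)=\wi E^{(2),s}(x;y_0)$ for $x_3 > g_+$, and then (again by analytic continuation through the constant-coefficient region and the known interface $\G_0$) in the full connected subdomain of $(\Om_{1f_1}\cap\Om_{1f_2})\setminus\{y_0\}$ reaching up to $\G_h$. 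Consequently the two \emph{total} dipole fields satisfy $\wi E^{(1)}(x;y_0)=\wi E^{(2)}(x;y_0) =: \wi E(x;y_0)$ throughout that common region, with the same dipole singularity at $y_0$.

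The third step is the geometric contradiction. Assume without loss of generality that there is a point $x^* \in \G_{f_1}$ lying strictly above $\G_{f_2}$, i.e. in the interior of $\Om_{1f_2}$; choose the dipole location $y_0 \in \Om_{1f_1}\cap\Om_{1f_2}$ and let it approach $x^*$. On the one hand, $\wi E(x;y_0)=\wi E^{(2)}(x;y_0)$ solves the $f_2$-problem, so it is smooth near $x^*$ and remains bounded as $y_0\to x^*$ along a suitable path while evaluated at a fixed test point near $x^*$. On the other hand, $\wi E(x;y_0)=\wi E^{(1)}(x;y_0)$ must satisfy the boundary condition of the $f_1$-grating on $\G_{f_1}$ at $x^*$ — either $\nu\times\wi E^{(1)}=0$ (Dirichlet part) or the impedance condition \eqref{l41.d} — while its dipole singularity is being driven onto the boundary, which forces blow-up of an appropriate norm, a contradiction with boundedness via the $f_2$-side. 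This standard singularity-vs-boundary-condition argument (cf.\ the treatment in \cite{HQZ}, and the analogous reasoning for the perfectly reflecting case in \cite{BZ,BZZ10}) gives $f_1 = f_2$. Once $f_1=f_2=:f$, repeat the dipole argument with $y_0$ approaching a point of $\G_{1,D}$ for one grating lying in $\G_{1,I}$ for the other: the total field would be forced simultaneously to vanish tangentially (Dirichlet) and to satisfy the impedance relation, which is impossible unless the decompositions coincide, so $\G_{f,D}=\G_{f,D}$ and $\G_{f,I}=\G_{f,I}$ agree. Finally, with $f$ and the decomposition fixed, equality of the total fields on $\G_{1,I}$ for all dipole sources plugged into the impedance condition \eqref{l41.d} gives $(\rho_1-\rho_2)(\nu\times\wi E)\times\nu = 0$ on $\G_{1,I}$; choosing a dipole whose field has nonvanishing tangential part somewhere on $\G_{1,I}$ (guaranteed because otherwise the field would be identically zero by unique continuation, contradicting the dipole singularity) yields $\rho_1=\rho_2$.

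I expect the main obstacle to be the singularity-blow-up step: one must verify that as $y_0$ tends to a boundary point of one grating while staying in the interior of the other, the common total field genuinely becomes unbounded in a norm that is controlled on the regular side — this requires knowing the precise local behavior of $\mat{\wi G}_1(x,y_0)$ and a careful choice of the approach path (normal to $\G_{f_1}$) so that the leading singular term does not accidentally satisfy the relevant boundary condition. A secondary technical point is making the chain of analytic-continuation steps (across $\G_0$ via the transmission conditions \eqref{l41.c}, and through the constant-coefficient medium) fully rigorous, for which Holmgren's theorem and the interior regularity results already invoked in the proof of Theorem \ref{thm3.1} suffice.
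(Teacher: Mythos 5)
Your proposal is correct and follows essentially the same route as the paper: transfer the plane-wave near-field coincidence to the dipole problems via the mixed reciprocity relation of Lemma \ref{le41}, derive the geometric contradiction by letting the dipole point approach a boundary point of one grating lying strictly inside the other (bounded on the regular side, blowing up through the boundary condition on the singular side), and then handle the coincidence of $\G_{1,D},\G_{1,I}$ and of $\rho_1,\rho_2$ by the same vanishing-field arguments. The technical obstacle you flag (quantifying the blow-up of the impedance expression applied to $\mat{\wi{G}}_1(\cdot,z_\eps)r$ at $z^*$ as $\eps\to 0$ against the uniform bound from well-posedness on the other side) is exactly the step the paper also treats briefly.
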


\begin{proof}
We assume without loss of generality that $f_1\neq f_2$ and there exists a
$z^*=(z_1^*,z_2^*,z_3^*)\in\G_{f_1}$ with $f_1(z_1^*,z_2^*)>f_2(z_1^*,z_2^*)$,
where $\G_{f_j}=\{x\in\R^3\,|\,x_3=f_j(x_1,x_2)\}.$
We choose $\eps>0$ such that $z_\eps:=z^*+\eps e_3\in\Om_{f_1}\cap\Om_{f_2}$.

Let $\wi{E}_{\eps,j}$ be the unique quasi-periodic solution to the scattered
problem (\ref{l41.a})-(\ref{l41.g}) with $y_0=z_\eps,$ $\rho=\rho_j,$ $f=f_j$.
By Lemma \ref{le41} we have
\be\label{thm4.1a}
r\cdot E_{1,m}(z_\eps)&=&\frac{8\pi^2i}{\la_0}
     \wi{\beta}_{-m}\wi{E}_{1,-m}(z_\eps)\cdot e_l,\\ \label{thm4.1b}
r\cdot E_{2,m}(z_\eps)&=&\frac{8\pi^2i}{\la_0}
     \wi{\beta}_{-m}\wi{E}_{2,-m}(z_\eps)\cdot e_l,
\en
where $\wi{E}_{j,n}(z_\eps)$ are the Rayleigh coefficients for $\wi{E}_{\eps,j}$.

On the other hand, from the Rayleigh expansion radiation condition and the assumption that
$\nu\ti E_{1,m}^s|_{\G_h}=\nu\ti E_{2,m}^s|_{\G_h}$
we conclude by the unique continuation principle that $E_{1,m}^s=E_{2,m}^s$ in $\Om_0$.
This, together with the transmission condition on $\G_0$ and Holmgren's uniqueness theorem,
implies that $E_{1,m}=E_{2,m}$ in $\Om_{f_1}\cap\Om_{f_2}$,
so $E_{1,m}(z_\eps)=E_{2,m}(z_\eps)$.
It then follows from (\ref{thm4.1a}) and (\ref{thm4.1b}) that
\ben
\frac{8\pi^{2}i}{\la_0}\wi{\beta}_{-m}\wi{E}_{1,-m}(z_\eps)\cdot e_l
=\frac{8\pi^{2}i}{\la_0}\wi{\beta}_{-m}\wi{E}_{2,-m}(z_\eps)\cdot e_l
\qquad\mbox{or}\quad \wi{E}_{1,-m}(z_\eps)=\wi{E}_{2,-m}(z_\eps).
\enn
Thus, by the Rayleigh expansion radiation condition we have
$\wi{E}_{\eps,1}(x)=\wi{E}_{\eps,2}(x)$ for $x_3>g_+$.
By the unique continuation principle, the transmission condition on $\G_0$
and Holmgren's uniqueness theorem again we obtain that
\ben
\wi{E}_{\eps,1}(x)=\wi{E}_{\eps,2}(x)\quad\mbox{in}\quad\ov{\Om}_0\quad
\mbox{and}\quad\wi{E}^s_{\eps,1}(x)=\wi{E}^s_{\eps,2}(x)\quad\mbox{in}
\quad\ov{\Om_{f_1}\cap\Om_{f_2}}.
\enn

Without loss of generality we may assume that $z^*$ lies on
the coated part of $\G_{f_1}$. Since $z^{*}$ has a positive distance
from $\G_{f_2}$, then the well-posedness of the direct problem implies
that there exists $C>0$ (independent of $\eps$) such that
\ben
|(\nu\ti\curl\wi{E}^s_{\eps,1}-i\rho_1\nu\ti\wi{E}^s_{\eps,1}\ti\nu)(z^*)|
=|(\nu\ti\curl\wi{E}^s_{\eps,2}-i\rho_1\nu\ti\wi{E}^s_{\eps,2}\ti\nu)(z^*)|\le C.
\enn
However, from the boundary condition on $\G_{f_1}$ it is seen that
\ben
&&|(\nu\ti\curl\wi{E}^s_{\eps,1}-i\rho_1\nu\ti\wi{E}^s_{\eps,1}\ti\nu)(z^*)|\\
&&\qquad=|(\nu\ti\curl[\mat{\wi{G}}_1(\cdot,z_\eps)r]
 -i\rho_1\nu\ti[\mat{\wi{G}}_1(\cdot,z_\eps)r]\ti\nu)(z^*)|\rightarrow +\infty
\enn
as $\eps\rightarrow 0$. This is a contradiction, which implies that $f_1=f_2$,
that is, $\Om_{f_1}=\Om_{f_2}$ and $\G_{f_1}=\G_{f_2}$.
Hence, we have $E_{1,m}=E_{2,m}$ in $\Om_{f_1}$.
We claim that $\G_{f_1,D}\cap\G_{f_2,I}$ must be empty
(so $\G_{f_1,D}=\G_{f_2,D}$ and $\G_{f_1,I}=\G_{f_2,I}$)
since, otherwise, a similar argument as below deduces that the total field $E_{1,m}$
vanishes in $\Om_{f_1}$, which is impossible.

Now let $f=f_1=f_2.$ Then by the boundary condition we deduce that
\ben
i(\rho_1-\rho_2)(\nu\times E_{1,m})\times\nu=0\quad\mbox{on}\quad\G_{1,I}.
\enn
If $\rho_1\not=\rho_2$, then the above equation implies that $\nu\times E_{1,m}=0$
on $\G_{1,I}$, so by the boundary condition again $\nu\times\curl E_{1,m}=0$ on $\G_{1,I}$.
Thus, by Holmgren's uniqueness theorem, $E_{1,m}=0$ in $\Om_1$.
By the transmission condition on $\G_0$ and Holmgren's uniqueness theorem again it follows
that $E_{1,m}=E^i_m+E^s_{1,m}=0$ in $\Om_0$, which is a contradiction.
The proof is thus completed.
\end{proof}

\subsection{Unique determination of the refractive index}\label{sec43}

We now consider the inverse problem of recovering the refractive index $q$.
We only consider the case that $\G_{1,I}=\emptyset$, that is, the grating surface $\G_1$ is
a perfect conductor. However, we expect the result to hold in a more
general case by constructing special solutions of the Maxwell equations.
Throughout this section we assume that the transmission constant $\la_0$ is known
and the shape of the grating surfaces $\G_0$ and $\G_1$ is also known and flat, that is,
for two known constants $b>c,$ $g(x')\equiv b$ and $f(x')\equiv c$ for all $x'\in\R^2.$

We have the following global uniqueness result for the inverse problem.

\begin{theorem}\label{thm4.2}
Assume that $q=q_j$ satisfies the conditions $(\textbf{A1})-(\textbf{A3})$
and that $q_j$ depends only on $x_1$ or $x_2$, $j=1,2.$ Let $h>b.$
If
$$
\nu\times E_{1,m}^s|_{\G_h}=\nu\times E_{2,m}^s|_{\G_h}
$$
for all incident waves
$E^i_m(x)=(1/k^2_0)\curl\curl[e_l\exp({i\al_m\cdot x-i\beta_{m}x_3})]$ with
$m\in\Z^2$ and $l=1,2,3$, then we have $q_1=q_2.$
Here, $E_{j,m}=E^i_m+E^s_{j,m}$ in $\Om_0$ and $E_{j,m}$ in $\Om_1$ are the
unique quasi-periodic solution of the scattering problem $(\ref{1.1a})-(\ref{RE})$
with $E^i=E^i_m$ and $q=q_j$, $j=1,2.$
\end{theorem}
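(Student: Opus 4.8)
The plan is to leverage the mixed reciprocity relation of Lemma \ref{le42}, which connects the measured data to the Rayleigh coefficients of a "dual" scattering problem with an electric dipole incident field located above the inhomogeneity. First I would observe that the hypothesis $\nu\times E_{1,m}^s|_{\G_h}=\nu\times E_{2,m}^s|_{\G_h}$ for all $m\in\Z^2$ and all polarizations $e_l$, together with the Rayleigh expansion and the unique continuation principle (and Holmgren's theorem across the flat interface $\G_0=\{x_3=b\}$), forces $E_{1,m}^s=E_{2,m}^s$ throughout $\Om_0$, hence $E_{1,m}=E_{2,m}$ on $\G_0$ together with their curls (via the transmission conditions). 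Then, for a fixed source point $y_0\in\Om_0$, I would apply Lemma \ref{le42} to both media: since $r\cdot E_{1,m}(y_0)=8\pi^2 i\,\wi\beta_{-m}\wi E_{1,-m}(y_0)\cdot p$ and likewise for $q_2$, and since $E_{1,m}(y_0)=E_{2,m}(y_0)$ for all $m,l,r$ (as $y_0\in\Om_0$ where the two total fields agree), I conclude that the dual solutions $\wi E_1(\cdot;y_0)$ and $\wi E_2(\cdot;y_0)$ have the same Rayleigh coefficients, hence agree for $x_3>b$, hence (again by unique continuation and Holmgren) agree throughout $\Om_0$. This means the two dual scattered fields $\wi E_1^s(\cdot;y_0)$ and $\wi E_2^s(\cdot;y_0)$ coincide in $\Om_0$ and in particular have the same Cauchy data on $\G_0$.

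Next I would transfer this information into the layer $\Om_1=\{c<x_3<b\}$. Because $\wi E_1$ and $\wi E_2$ share the same dipole incident field $\mat{\wi G}_0(\cdot,y_0)r$ and the same Cauchy data on $\G_0$ (via the transmission conditions with the known constant $\la_0$), the difference $w:=\wi E_1-\wi E_2$ satisfies $\curl\curl w - k_0^2 q_1 w = k_0^2(q_1-q_2)\wi E_2$ in $\Om_1$, with $\nu\times w=0$ and $\nu\times\curl w=0$ on $\G_0$, and with $\nu\times w=0$ on the flat perfect conductor $\G_1=\{x_3=c\}$ (here $\G_{1,I}=\emptyset$). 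The key analytic step is then to use the flatness of the geometry and the one-dimensional dependence of $q_j$ to construct, for a suitable family of incident waves, a dense set of products of solutions: I would Fourier-decompose in the periodic variables, reducing the Maxwell system in the slab to a family of one-dimensional ODE systems parametrized by the Fourier/quasi-periodic mode, and then invoke an orthogonality (integration-by-parts) identity of the form $\int_{\Om_1}(q_1-q_2)\,E_{2,m}\cdot \wi E_{2,m'}\,dx=0$ obtained by pairing the PDE for $w$ against the dual solution and using the vanishing Cauchy data. Running over all incident modes $m$ and all dual source positions $y_0$ (equivalently all dual modes), these products span enough of $L^2(\Om_1)$ — using the completeness of exponential/trigonometric systems in the periodic directions together with a Wronskian/linear-independence argument for the ODE solutions in $x_3$ — to force $q_1-q_2=0$ a.e., hence everywhere by continuity ((\textbf{A1})).

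The main obstacle I anticipate is the completeness-of-products step: showing that the set $\{E_{2,m}\cdot \wi E_{2,m'}\}$ is rich enough to annihilate only the zero function. This is where the assumption that $q_j$ depends on only one variable ($x_1$ or $x_2$) is essential and is the improvement over \cite{HYZ}: after Fourier expansion in the two periodic directions, the coupling matrix of the ODE system in $x_3$ can be diagonalized mode-by-mode, so that the products reduce to sums of one-dimensional products $e^{i(n+n')\cdot x'}\,\phi_n(x_3)\psi_{n'}(x_3)$; the orthogonality relation then splits, by the completeness of $\{e^{i k\cdot x'}\}$, into a separate vanishing statement for each fixed difference $n+n'$, and within each such statement a classical argument (essentially invertibility of the Laplace-transform-type map, or linear independence of exponentials $e^{\pm\sqrt{\mu_n}x_3}$) gives $q_1=q_2$. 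I would also need to be careful about the finitely many "resonant" modes where $\beta_n=0$ — but these are excluded by the standing hypothesis $\beta_n\neq0$ — and about the regularity needed to differentiate $q$, which is supplied by (\textbf{A1}). Mimicking the structure of the proof of Theorem 4.1 in \cite{HYZ}, with Lemmas \ref{le41}--\ref{le42} replacing their reciprocity step and the one-variable hypothesis streamlining the completeness argument, should complete the proof.
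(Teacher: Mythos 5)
Your overall strategy matches the paper's: use the mixed reciprocity relation of Lemma \ref{le42} to convert equality of the measured near fields into equality of the Rayleigh coefficients of the dual (dipole-source) scattered fields, propagate this by unique continuation and the transmission conditions to equal Cauchy data on $\G_0$, write the inhomogeneous Maxwell equation for the difference of the two dual fields, and integrate by parts against a second solution to obtain an orthogonality relation of the form $\int_{\Om_1}(q_1-q_2)(\,\cdot\,)(\,\cdot\,)\,dx=0$, finishing with a completeness-of-products argument. There is, however, a concrete gap between your orthogonality identity and your final step. The identity you derive by pairing the equation for $w$ against a test solution has one factor pinned to be the specific dual total field $\wi{E}_2(\cdot\,;y_0)$ restricted to $\Om_1$ --- a dipole-generated field, not a quasi-periodic mode --- while your completeness argument Fourier-decomposes both factors into quasi-periodic modes and treats them as freely chosen ODE solutions. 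You never show that the family $\{\wi{E}(\cdot\,;y_0)\,|\,y_0\in\Om_0,\ r\in\R^3\}$, restricted to $\Om_1$, is dense in the space of solutions of $\curl\curl u-k_0^2qu=0$ in $\Om_1$ with $\nu\times u|_{\G_1}=0$. The paper closes exactly this hole by replacing the single point source with the superposed incident fields $E^{i}(x;r)=\int_{\G_h}\mat{\wi{G}}_0(x,y)r(y)\,ds(y)$ for all densities $r\in L^2_t(\G_h)$ and invoking the denseness Lemma \ref{le44} (the range of $r\mapsto e_3\times\wi{E}(\cdot;r)|_{-}$ is dense in $H^{-1/2}_t(\dive,\G_0)$), which, via well-posedness in the slab, upgrades the orthogonality to the relation (\ref{orth}) valid for \emph{arbitrary} solutions $E_1$, $E_2$ of the two Maxwell equations with the perfect-conductor condition on $\G_1$. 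Your plan needs either that lemma or a point-source analogue of it; as written, the final step does not connect to the identity you actually proved.

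A smaller but real problem is your description of the last step itself. If $q_j$ depends on $x_1$, the Fourier modes in $x_1$ do \emph{not} decouple: the coefficient multiplies the field pointwise, so after expanding in $e^{i\al_n\cdot x'}$ the resulting ODE system in $x_3$ couples all $n_1$ for each fixed $n_2$, and ``diagonalizing the coupling matrix mode-by-mode'' is not available. Also, the test function in the orthogonality identity must solve the homogeneous equation for the appropriate medium (with conjugation), so the pairing $E_{2,m}\cdot\wi{E}_{2,m'}$ as you wrote it is not the quantity that comes out of Green's formula. The paper sidesteps both issues by reducing to (\ref{orth}) and then citing the argument of Theorem 5.4 of \cite{HYZ}, which is where the hypothesis that $q_j$ depends on only one variable is actually consumed.
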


\begin{remark}\label{r2}{\rm
Theorem \ref{thm4.2} improves the result in \cite[Theorem 5.4]{HYZ}, where
only the special case $\la_0=1$ is considered and incident waves of the form
(\ref{s43.1}) below are used for all $r\in L^2_t(\G_h)$.
}
\end{remark}

To prove Theorem \ref{thm4.2} we need the following denseness result
which is related to the incident waves of the form
\be\label{s43.1}
E^{i}(x;r)=\int_{\G_h}\mat{\wi{G}}_0(x,y)r(y)ds(y),\qquad x_3<h,
\en
where $r\in L^2_t(\G_h)$.
This result was proved in \cite[Lemma 5.2]{HYZ} for the case $\la_0=1$, and
the general case can be proved similarly (see the proof of Lemma 5.2 in \cite{HYZ}).

\begin{lemma}\label{le44}
The operator $F$ has a dense range in $H^{-1/2}_t(\dive,\G_0)$. Here,
$F:L^2_t(\G_h)\rightarrow H^{-1/2}_t(\dive,\G_0)$ is defined by
$(Fr)(x)=e_3\times\wi{E}(x;r)|_{-}$ on $\G_0$, where $\wi{E}(x;r)$ is the solution
of the scattering problem $(\ref{1.1a})-\ref{RE})$ with the incident wave $E^i(x)=E^i(x;r)$
given by $(\ref{s43.1}).$
\end{lemma}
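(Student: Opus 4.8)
The plan is to prove the denseness of the range of $F$ by a standard duality/adjoint argument combined with the mixed reciprocity relation of Lemma \ref{le42}. First I would identify the dual space: since $(H^{-1/2}_t(\dive,\G_0))' = H^{-1/2}_t(\curl,\G_0)$ by the duality result recorded in Section \ref{sec2}, it suffices to show that if $\phi \in H^{-1/2}_t(\curl,\G_0)$ satisfies $\langle Fr, \phi\rangle_{\G_0} = 0$ for all $r \in L^2_t(\G_h)$, then $\phi = 0$. The task is therefore to translate this orthogonality condition, using the definition $(Fr)(x) = e_3\times\wi E(x;r)|_{-}$ and the representation $(\ref{s43.1})$ of $E^i(x;r)$ as a superposition of quasi-periodic Green's tensor dipoles with densities $r(y)$ on $\G_h$, into a statement that forces $\phi$ to vanish.

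The key step is to interchange the integration over $\G_h$ (hidden in the definition of $E^i(\cdot;r)$) with the pairing on $\G_0$, so that the orthogonality condition becomes $\int_{\G_h}\big(\text{something involving }\phi\big)(y)\cdot r(y)\,ds(y) = 0$ for all $r \in L^2_t(\G_h)$, whence that ``something'' must vanish identically on $\G_h$. To carry this out I would fix $y_0 \in \Om_0$ and take $r$ concentrated near $y_0$ (or work directly with the dipole $E^i(x;y_0) = \mat{\wi G}_0(x,y_0)s$), and introduce the auxiliary field: let $w$ solve the adjoint transmission/boundary value problem in $\Om_h\setminus\ov{\Om}_1$ with Cauchy data on $\G_0$ determined by $\phi$ and with $\nu\times w|_{\G_h} = 0$; alternatively, interpret the vanishing pairing as saying that the solution $\wi E(\cdot;y_0)$ of the scattering problem $(\ref{1.1a})$--$(\ref{RE})$ with incident dipole at $y_0$ has tangential trace on $\G_0$ orthogonal to $\phi$. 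Then I would invoke Lemma \ref{le42}: the reciprocity relation $r\cdot E(y_0;m) = 8\pi^2 i\,\wi\beta_{-m}\wi E_{-m}(y_0)\cdot p$ links the Rayleigh coefficients of the dipole-generated field at $y_0$ to the values at $y_0$ of the plane-wave-generated solutions $E(\cdot;m)$. The orthogonality of $\phi$ to all $Fr$ translates (after expanding in the $\al$-quasi-periodic basis and using that $y_0$ ranges over an open set) into the statement that a certain combination of the Rayleigh modes $\wi E_{-m}(y_0)$ vanishes for all $m$, hence $\wi E^s(\cdot;y_0) \equiv 0$ for $x_3 > g_+$, hence by unique continuation $\wi E^s(\cdot;y_0) \equiv 0$ throughout $\Om_0$ for every $y_0$.

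From $\wi E^s(\cdot;y_0) \equiv 0$ in $\Om_0$ for all $y_0 \in \Om_0$ I would conclude, exactly as in \cite{HYZ}, that the map sending incident data to its ``reflected'' part is trivial in a sense that contradicts the nonvanishing of the fundamental singularity — or, more directly, that $\phi$ annihilates a set of traces that is already known (via the trace theorem on $H(\curl,\Om_h\setminus\ov\Om_1)$ and the surjectivity statements in Section \ref{sec2}) to be dense in $H^{-1/2}_t(\dive,\G_0)$, forcing $\phi = 0$. The bookkeeping will need care: one must verify that the interchange of $\int_{\G_h}$ with the $\G_0$-pairing is legitimate (this follows from Fubini together with the smoothness of $\mat{\wi G}_0(x,y)$ for $x \in \Om_1\cup\G_0$ away from $y\in\G_h$, since $\G_h$ and $\ov\Om_1$ are separated), and that the quasi-periodicity is handled consistently so that only the $\al$-quasi-periodic Fourier modes contribute.

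The main obstacle I expect is the transition from ``$\wi E^s(\cdot;y_0) = 0$ in $\Om_0$ for all $y_0$'' to ``$\phi = 0$'': one must argue that the tangential traces on $\G_0$ of the scattered fields $\wi E^s(\cdot;y_0)$ (equivalently, by reciprocity, of the plane-wave solutions), as $y_0$ varies, span a dense subspace of $H^{-1/2}_t(\curl,\G_0)$'s predual — i.e., a Runge-type approximation property for this transmission problem. Since the paper defers to \cite[Lemma 5.2]{HYZ} for the case $\la_0 = 1$ and asserts the general $\la_0$ case goes through verbatim, I would structure the proof to isolate exactly the place where $\la_0$ enters (only through the transmission condition $(\ref{1.1c})$ and the factor $1/\la_0$ versus $1$ in the reciprocity relations of Lemmas \ref{le41} and \ref{le42}), check that this factor is a harmless nonzero constant throughout, and otherwise quote the denseness/unique-continuation machinery of \cite{HYZ} wholesale.
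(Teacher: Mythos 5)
The first thing to say is that the paper itself contains no proof of Lemma \ref{le44}: it simply records that the case $\la_0=1$ is \cite[Lemma 5.2]{HYZ} and asserts that the general case ``can be proved similarly.'' Your closing paragraph --- isolate the places where $\la_0$ enters, check that it is a harmless nonzero constant, and otherwise quote the machinery of \cite{HYZ} --- is therefore exactly what the authors do, and your identification of the dual space via $(H_t^{-1/2}(\dive,\G_0))'=H_t^{-1/2}(\curl,\G_0)$ and of the Fubini interchange (legitimate because $\G_h$ is separated from $\ov{\Om}_1\cup\G_0$) is the right opening move for the argument that \cite{HYZ} actually runs.

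The sketch you insert between these two correct endpoints, however, contains a genuine non sequitur. If $\phi$ annihilates $Fr$ for every $r\in L^2_t(\G_h)$, then after interchanging the $\G_h$-integral with the $\G_0$-pairing you obtain only that, for each source point $y\in\G_h$ (not for $y_0$ ranging over an open subset of $\Om_0$ --- the dipoles in $(\ref{s43.1})$ sit on $\G_h$), the single scalar quantity $\langle e_3\times\wi{E}(\cdot;y)|_{-},\phi\rangle_{\G_0}$ vanishes. This is orthogonality of a family of traces to one fixed functional; it does not and cannot imply that all Rayleigh coefficients of $\wi{E}^s(\cdot;y)$ vanish, so your conclusion ``$\wi{E}^s(\cdot;y_0)\equiv 0$ for every $y_0$'' is false as a deduction (and would, if true, be a statement about the direct problem independent of $\phi$). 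Moreover, Lemma \ref{le42} is the wrong reciprocity for this lemma: it couples plane-wave incidence to point-source incidence and is the ingredient of Theorem \ref{thm4.2}, not of the denseness statement. The argument that closes is the adjoint-field route you mention only parenthetically and then abandon: encode $\phi$ as the data of an auxiliary transmission problem in $\Om_h$ with a prescribed jump on $\G_0$ and vanishing Rayleigh data, use the symmetry of the full quasi-periodic Green tensor (point-source-to-point-source reciprocity) to convert ``$\langle e_3\times\wi{E}(\cdot;y)|_{-},\phi\rangle_{\G_0}=0$ for all $y\in\G_h$'' into the vanishing of the tangential trace of that auxiliary field on $\G_h$, invoke uniqueness for the direct problem and unique continuation to kill the auxiliary field, and read off $\phi=0$ from the jump relation across $\G_0$. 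As written, your main chain of implications does not reach $\phi=0$.
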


\textbf{Proof of Theorem \ref{thm4.2}.}
For any $r\in L^2_t(\G_h)$ and $y\in\G_h$ we have by Lemma \ref{le42} that
\be\label{pt42.1}
r(y)\cdot E^s_j(y;m)=8\pi^{2}i\wi{\beta}_{-m}\wi{E}_{j,-m}(y)\cdot e_l,\qquad j=1,2,\;l=1,2,3,
\en
where $\wi{E}_{j,-m}(y)$ are the Rayleigh coefficients of the scattered field
$\wi{E}^{s}_{j}(\cdot;y)$ corresponding to $q=q_j$ and the incident wave
$E^i(x)=\mat{\wi{G}}_0(x,y)r(y).$ It follows from (\ref{pt42.1}) that
\be\label{pt42.2}
\int_{\G_h}r(y)\cdot E^s_j(y;m)ds(y)
=8\pi^{2}i\wi{\beta}_{-m}\int_{\G_h}\wi{E}_{j,-m}(y)ds(y)\cdot e_l.
\en
Denote by $\wi{E}^s_j(x;r)$ and $\wi{E}_j(x;r)$ the scattered and total electric fields,
respectively, corresponding to $q=q_j$ and the incident wave $E^i(x)=E^i(x;r)$, $j=1,2.$
Then from the definition (\ref{s43.1}) of $E^i(x;r)$ it is seen that
\be\label{pt42.3}
\int_{\G_h}\wi{E}_{j,-m}(y)ds(y)\;\;\text{are the Rayleigh coefficients of}\;\;\wi{E}^s_j(x;r).
\en
On the other hand, from the Rayleigh expansion radiation condition and the assumption that
$\nu\times E_1^s(x;m)=\nu\times E_2^s(x;m)$ on $\G_h$
we conclude on using the unique continuation principle that
$E_1^s(x;m)=E_2^s(x;m)$ in $\ov{\Om}_0.$
This, together with (\ref{pt42.2}) and (\ref{pt42.3}), implies that
\ben
\int_{\G_h}\wi{E}_{1,-m}(y)ds(y)=\int_{\G_h}\wi{E}_{2,-m}(y)ds(y).
\enn
From this, the Rayleigh expansion radiation condition and the unique continuation principle
it follows that
\ben
\wi{E}^s_1(x;r)=\wi{E}^s_2(x;r)\quad\mbox{or}\quad
\wi{E}_1(x;r)=\wi{E}_2(x;r)\quad\mbox{in}\;\;\Om_h\ba\Om_1.
\enn
With the help of the transmission conditions on $\G_0$, we get
\ben
\nu\times\wi{E}_1(x;r)|_{-}&=&\nu\times\wi{E}_2(x;r)|_{-}\qquad\quad\;\;\mbox{on}\;\;\G_0,\\
\nu\times\curl\wi{E}_1(x;r)|_{-}&=&\nu\times\curl\wi{E}_2(x;r)|_{-}\qquad\mbox{on}\;\;\G_0.
\enn
Now define $E(x):=\wi{E}_{1}(x;r)-\wi{E}_{2}(x;r)$ in $\ov{\Om}_1$.
Then $E$ satisfies the equation
\ben
\curl\curl E-k^2_0q_2E=k^2_0(q_1-q_2)\wi{E}_1(x;r)\quad\mbox{in}\;\;\Om_1
\enn
and the boundary conditions
\ben
\nu\times E=0,\;\;\nu\times\curl E&=&0\qquad\mbox{on}\;\;\G_0,\\
\nu\times E&=&0\qquad\mbox{on}\;\;\G_1.
\enn
Thus it follows from Green's vector formula that
\be\no
&&k_0^2\int_{\Om_1}(q_1-q_2)\wi{E}_1(x;r)\cdot\ov{E}_2(x)dx\\ \no
&&\qquad\quad=\int_{\Om_1}(\curl\curl E-k^2_0q_2E)\cdot\ov{E}_2(x)dx\\ \no
&&\qquad\quad=\int_{\Om_1}(\curl\curl\ov{E}_2(x)-k^2_0q_2\ov{E}_2(x))\cdot E(x)dx\\
&&\qquad\quad=0 \label{pt42.4}
\en
for any $r\in L^2_t(\G_h),$ where $E_2\in H(\curl,\Om_1)$ satisfies
the Maxwell equation (\ref{1.1b}) with $q=\ov{q}_2$ and the boundary
condition $\nu\times E_2|_{\G_1}=0.$

Now by Lemma \ref{le44} and (\ref{pt42.4}) we obtain that
\be\label{orth}
\int_{\Om_1}(q_1-q_2)E_1(x)\cdot\ov{E}_2(x)dx=0,
\en
where $E_1$ satisfies of the Maxwell equation (\ref{1.1b}) with $q=q_1$
and the boundary condition $\nu\times E_1|_{\G_1}=0.$

Finally, using the orthogonal relation (\ref{orth}) and arguing in exactly the same
way as in the proof of Theorem 5.4 in \cite{HYZ}, we can easily prove that $q_1=q_2$.
The proof is thus completed. $\Box$

\section*{Acknowledgements}

We thank the referee for the valuable comments on the above paper which
helped improve the paper greatly.
This work was supported by the NNSF of China under grant 11071244.

\end{document}